\theoremstyle{plain}
\newtheorem{thm}{Theorem}
\newtheorem{prp}[thm]{Proposition}
\newtheorem{definition}{Definition}
\newtheorem{remark}[thm]{Remark}
\newcommand{\R}{\mathbb{R}}
\newcommand{\N}{\mathbb{N}}
\newcommand{\E}{\mathbb{E}}
\newcommand{\Z}{\mathbb{Z}}
\newcommand{\C}{\mathbb{C}}
\begin{document}
\title{On a Stochastic Leray-$\alpha$ model of Euler equations}

\author{David Barbato\footnote{Universit\`a di Padova, Dipartimento di Matematica, Padova, Italy,
barbato@math.unipd.it} \
\& Hakima Bessaih\footnote{University of Wyoming, Department of Mathematics, Dept. 3036, 1000
East University Avenue, Laramie WY 82071, United States, bessaih@uwyo.edu} \
 \& Benedetta Ferrario\footnote{Universit\`a di Pavia, Dipartimento di Matematica, via
  Ferrata 1, 27100 Pavia, Italy,
benedetta.ferrario@unipv.it}}
\maketitle

\begin{abstract}
We deal with the 3D inviscid Leray-$\alpha$ model.
The well posedness for this problem is not known; by adding
a random perturbation we prove that there exists a unique (in law)
global solution. 
The random forcing term  formally preserves  conservation of energy.
The result holds for initial velocity of finite energy 
and the solution has finite energy a.s.. 
These results are easily extended to the 2D case.
\end{abstract}
\noindent
{\bf MSC2010}: 35Q31, 60H15, 35Q35.\\
{\bf Keywords}: Inviscid Leray-$\alpha$ models, Euler equations, 
Multiplicative noise, Uniqueness in law, Stratonovitch integral, 
 Girsanov formula.

\section{Introduction}
The motion of incompressible fluids is described by the
Navier-Stokes equations
\begin{equation}\label{NSE}
\left\{
\begin{array}{l}
 \dfrac{\partial v}{\partial t} -\nu\Delta v+ (v \cdot \triangledown ) v +\triangledown p=f\\
\text{div}\ v=0
\end{array}
\right.
\end{equation}
for viscous fluids, or by the Euler equations
\begin{equation}\label{EE}
\left\{
\begin{array}{l}
 \dfrac{\partial v}{\partial t} + (v \cdot \triangledown ) v +\triangledown p=f\\
\text{div}\ v=0
\end{array}
\right.
\end{equation}
for inviscid fluids.

The unknown are the velocity field $v=v(t,x)$ and the pressure field
$p=p(t,x)$; $f$ is a given external force and $\nu>0$ is the viscosity
that corresponds to the inverse of the Reynolds number Re.
When the fluid moves in a bounded domain, suitable boundary conditions are associated to these equations, respectively, the no-slip and slip conditions.

The above two systems have a quite different behavior; for instance, 
when $f=0$ system \eqref{NSE} 
is  dissipative  while system \eqref{EE} is conservative.
It is well known, since the seminal work of Leray, that for initial
velocity of finite energy
the 3D Navier-Stokes system \eqref{NSE} has  a global weak solution
but its uniqueness is still an open problem. However, for the 3D Euler system
\eqref{EE} neither the global existence nor the uniqueness of global solutions are known
when the initial velocity is of finite energy (we refer to the review paper
  \cite{bardos-titi2007}  on this topic). 

We recall that to prove the existence of solutions to \eqref{NSE} 
in $\mathbb{R}^{d},\, d=2,3$, Leray \cite{leray}
considered the following regularization for $\alpha>0$
\begin{equation}\label{NSalpha}
\left\{
\begin{array}{l}
 \dfrac{\partial  v^{\alpha}}{\partial t}-\nu \Delta v^{\alpha}
 +(u^{\alpha} \cdot \triangledown ) v^{\alpha} +\triangledown p=f\\
u^{\alpha}=G_{\alpha} \ast v^{\alpha} \\ 
\text{div}\ v^{\alpha}=0
\end{array}
\right.
\end{equation}
where $G_{\alpha}$ is a smoothing kernel such that 
$u^{\alpha}\longrightarrow v^0$, in some sense,
as $\alpha\rightarrow 0$. In particular, system \eqref{NSalpha}
converges to the Navier-Stokes system \eqref{NSE} as
$\alpha\rightarrow 0$.
 
In \cite{Titi_leray}, a special smoothing kernel was considered, namely, the Green function associated  with the operator $1-\alpha\Delta$,
\begin{equation}\label{delta-alpha}
 u^{\alpha}=G_{\alpha} \ast v^{\alpha} =(1-\alpha\Delta)^{-1} v^{\alpha} 
 \end{equation}
for $\alpha>0$.
This kernel works as a kind of filter with width $\alpha$ and the parameter 
$\alpha$ reflects a sub-grid length scale in the model. 
This model was inspired by the Navier-Stokes $\alpha$ model (also known 
as the Camassa-Holm system or Lagrangian averaged Navier-Stokes
$\alpha$ equations) 
of turbulence, see \cite{titi1,titi2,titi3} and the references 
therein. Moreover,  it has been demonstrated analytically and computationally 
that the Navier-Stokes $\alpha$ model is a powerful tool in the study of turbulence, 
see \cite{titi3} and the reference therein. Along the same lines, it is worth 
mentioning that other $\alpha$ models, such as the Clark-$\alpha$ model 
\cite{titi_clarck} and  the Navier-Stokes Voigt equations \cite{titi_voigt}, 
have been used as a sub-grid scale models of turbulence.

In this paper, we are interested in a stochastic version of
 system \eqref{NSalpha} with $\nu=0$ and regularization given by 
\eqref{delta-alpha}, that is the following stochastic Leray-$\alpha$ model of
Euler equations
\begin{equation}\label{EEalpha}
\left\{
\begin{array}{l}
 d v^{\alpha}
 +[(u^{\alpha} \cdot \triangledown ) v^{\alpha} +\triangledown p]\ dt=
 ((\sigma\circ d W)\cdot \triangledown)  v^{\alpha}\\
v^{\alpha}=(1-\alpha \bigtriangleup) u^{\alpha} \\
\text{div}\ v^{\alpha}=0
\end{array}
\right.
\end{equation}
Here $W$ is a Brownian motion (in time) 
and $\circ d W$ referes to the  Stratonovitch 
differential; the parameter $\alpha$ is positive. When $\alpha=0$ the
first equation of \eqref{EEalpha} reduces to the stochastic Euler equation.

The well posedness of weak solutions of the deterministic 
system \eqref{EEalpha} ($\sigma=0$) is not known.
In particular, when the initial velocity has finite energy, 
existence of global weak  solutions 
can be proven  if $\alpha>0$, see the Appendix. 
However, the uniqueness is not known for both $d=2$ and $d=3$.
If $\alpha=0$ and $\sigma=0$, global existence of solutions are known 
for initial velocity of finite energy  and enstrophy for $d=2$ while it is 
open for $d=3$; their uniqueness is an open problem for $d=2$ and
$d=3$ (see, e.g., \cite{bardos-titi2007}).

Let us point out that 
the analysis of the deterministic Leray-$\alpha$ 
Euler equations in 3D, that is system (5) with $\sigma=0$, 
is more difficult than for the other approximations models for 
3D inviscid fluids (i.e., Camassa-Holm,  Clark and Voigt
models). Indeed,  for these other models there is formal conservation
of the sum $\int|v^\alpha|^2dx+\alpha \int |\nabla v^\alpha|^2 dx$ ($\alpha>0$), 
whereas the model we are interested in has only formal 
conservation of $\int|v^\alpha|^2dx$. From this point of view the 
deterministic Leray-$\alpha$ model  for 3D Euler equations 
considered in this paper is closer to the 3D Euler equations 
than the Voigt, Camassa-Holm and Clark models which regularize much
more the original Euler equations.

When adding an appropriate stochastic perturbation, we will prove that  
system \eqref{EEalpha} has a unique global solution (in law) when the initial  
condition is of finite energy. 
We will prove existence and uniqueness (in law) of solutions by means 
of Girsanov formula.
The multiplicative noise in \eqref{EEalpha} formally preserves 
the conservation of energy (see Section 3 for the details).
It is crucial to choose the random perturbation in \eqref{EEalpha} to
be written in the Stratonovitch form in such a way  that formally the energy of
the vector field $v^\alpha$ is conserved.

All our results are stated for a three dimensional spatial domain 
(a box $[0,2\pi]^3$, assuming periodic boundary conditions), but our proofs 
can be easily adapted to the two dimensional case. It would be interesting 
to study the behavior of the process $v^\alpha$ when $\alpha$ and/or
$\sigma$ converge to zero. This is  the subject of future research.

In the past few years, there has been a huge effort to tackle the problem 
of using a similar noise in order to improve the qualitative properties 
of some non linear equations.
In particular uniqueness of the stochastic equation has been proved either
when uniqueness is not known in the deterministic setting or with
weaker assumptions than in the deterministic setting; for these results,
see
\cite{attanasio-flandoli,BFM,BFM0,flandoli2009,flandoli2011,
flandoli-gubinelli-priola0,flandoli-gubinelli-priola1} 
and the references therein. We refer to 
\cite{CFM,flandoli-gubinelli-priola2} for the 2D Euler equations, 
and  to \cite{debussche-daprato, flandoli-romito} for some analysis on
the 3D Navier-Stokes equations.
For an overview of the problems and methods,
we refer to \cite{flandoli-stflour}.

As far as the content of this paper, in Section 2 we will introduce our 
functional setting and spaces. Section 3 will be devoted to the description 
of the stochastic Leray-$\alpha$ model in  Fourier components. 
We will write the model in both the Stratonovitch and It\^o forms. 
Section 4 will focus  on the linear model: global existence 
and uniqueness of strong 
(in the probabilistic sense) solutions will be proved.  The uniqueness proof 
is based on the study of a new linear problem constructed by means of the 
covariance matrix $A_{k}$.  The nonlinear model will be  studied in Section 5,
where we prove the existence and uniqueness of solutions  (in law) 
by means of a Girsanov formula.
In Section 6, our results  will be stated for the 
stochastic partial differential equation \eqref{EEalpha}.
To make  our paper  self-contained,
in the Appendix we will give the proof of global existence of weak solutions for
the deterministic Leray-$\alpha$ model of Euler equations.

\section{Functional setting}
Let the spatial domain be a torus $\mathbb T$, i.e. 
$x\in  \R^3$ and periodic boundary conditions on
the cube $[0,2\pi]^3$ are assumed. Notice that if $v$ is a solution,
then also $v+c$ ($c \in \mathbb R$)
is a solution. Therefore,
we consider mean zero velocity vectors, i.e.
$\int_{[0,2\pi]^3}v(t,x)\ dx=0$.

We fix notations. 
Given a complex number $z\in \C$, we denote, respectively,  
by $\Re z$ and $\Im z$ its
real and imaginary part; hence, $\overline z=\Re z-i \Im z$
and the product of two complex numbers $z$ and $w$ is 
$wz=(\Re w \Re z - \Im w \Im z)+ i (\Re w \Im z + \Im w \Re z)$.

Morever, let $x \in \C^3$ be represented as $x=(x^{(1)},x^{(2)},x^{(3)})$. 
For $x, y \in \C^3$, we set
$\langle x,y\rangle=\sum_{j=1}^{3}x^{(j)}\overline{y}^{(j)}$
and $\|x\|^2=\langle x,x\rangle$.
This defines, as a particular case, also the scalar product and the
norm in $\R^3$.

For each $k\in\Z^3$, let $e_k(x):=e^{i\langle x,k\rangle}$, $x \in \R^3$.
The family $\{e_k\}_{k\in\Z^3}$ is a complete orthogonal  basis
for the space $L^2 (\mathbb T,\C)$.

In this Section, we write the deterministic Leray-$\alpha$ model 
of the Euler system  
\eqref{EEalpha} 
\begin{equation}\label{EEdet}
\left\{
\begin{array}{l}
 \dfrac{\partial v^{\alpha}}{\partial t}
 +(u^{\alpha} \cdot \triangledown ) v^{\alpha} +\triangledown p=0\\
v^{\alpha}=(1-\alpha \bigtriangleup) u^{\alpha} \\
\text{div}\ v^{\alpha}=0,
\end{array}
\right.
\end{equation}
in Fourier components; this will be given in
\eqref{Eu-Fou}.
In the next Section, the stochastic forcing term will be introduced.

Assume $v(t,\cdot)$ and $u(t,\cdot)$ are in  $(L^2(\mathbb T,\C))^3$; then
$$
 v(t,x)=\sum_{k\in\Z^3} v_k(t) e_k(x)
 \qquad \text{ and  } \qquad
 u(t,x)=\sum_{k\in\Z^3} u_k(t) e_k(x)
$$
We have
$v_0=0$ and $u_0=0$, since $v$ has mean value zero. Moreover, since
$v$ and $u$ are real valued and $e_{-k}(x)=\overline{e_k(x)}$, we have
\[
  v_{-k}(t)=\overline{v_k}(t), \qquad u_{-k}(t)=\overline{u_k}(t).
\]
We set $\|v(t,\cdot)\|^2_{l^2}=\sum_k \|v_k(t)\|^2$.

We substitute in \eqref{EEdet}$_{2}$ and get
$$
 \sum_k v_k(t) e_k(x)=(1-\alpha \bigtriangleup) \sum_k u_k(t) e_k(x).
$$
From now on, we will drop the index $\alpha$ in the unknowns for simplicity.

Since $\bigtriangleup e_k(x)=-\|k\|^2 e_k(x)$ then
$ v_k(t)= \left( 1+\alpha \|k\|^2\right) u_k(t)$ and
$$
 \sum_k v_k(t) e_k(x)= \sum_k \left(1+\alpha \|k\|^2\right)u_k(t) e_k(x).
$$
From equation \eqref{EEdet}$_{3}$ we get the incompressibility condition
$$
 \langle v_k(t),k\rangle=0 \qquad \forall k\in \Z^3, \ t \in \mathbb R.
$$
Finally, using \eqref{EEdet}$_{1}$ we obtain
\[
\begin{split}
 \frac{d}{dt} &\sum_k v_k(t) e_k(x)
\\&=
 -\left(
 \sum_h u_h(t) e_h(x) \cdot  \triangledown \right) \sum_{k'} e_{k'}(x) v_{k'}(t)
 -\triangledown p(t,x)
\\&
 =-\sum_{h,k'} e_h(x) \left(
 u_h^{(1)}(t)\tfrac{\partial}{\partial x^{(1)}}+u_h^{(2)}(t)\tfrac{\partial}{\partial x^{(2)}}
 +u_h^{(3)}(t)\tfrac{\partial}{\partial x^{(3)}}\right) e_{k'}(x) v_{k'}(t)-\triangledown p(t,x)
\\&
 =-\sum_{h,k'} e_h(x) \left(
 u_h^{(1)}(t)i k'^{(1)}+u_h^{(2)}(t)i k'^{(2)}
 +u_h^{(3)}(t)i k'^{(3)}\right) e_{k'}(x) v_{k'}(t)-\triangledown p(t,x)
\\&
 =-i \sum_{h,k'} e_{h+k'}(x) \langle u_h(t),k'\rangle  v_{k'}(t)-\triangledown p(t,x),
\end{split}
\]
that is
$$
\frac{d}{dt} \sum_k v_k(t) e_k(x)=-i \sum_{h,k'} \frac{\langle v_h(t),k'\rangle}
{1+\alpha\|h\|^2}  P_k(v_{k'}(t)) e_{h+k'}(x)
$$
where
\begin{equation}\label{proiettore}
 P_k(v):= v - \frac{\left<v,k\right>}{\left<k,k\right>}\ k
\end{equation}
is the projection onto the space orthogonal to $k$.
\\
Summing up, since  $\langle v_h,k \rangle=\langle v_h,k-h\rangle$,
we obtain the system \eqref{EEdet}  written in Fourier  components
\begin{equation}\label{Eu-Fou}
\left\{
\begin{array}{l}
\dfrac{d v_k}{dt} (t) = -i \sum_{h \in \Z^3}
\dfrac{\langle v_h(t),k \rangle}{1+\alpha\|h\|^2}P_k(v_{k-h}(t))\\
\langle v_k(t),k \rangle=0\\
v_{-k}(t)=\overline{v_k}(t)
\end{array}
\right.
\end{equation}
for any $k$.

We notice that, given $\alpha \ge 0$, 
if $\sum_k \|v_k(t)\|^2<\infty$, then
the series in the r.h.s. of \eqref{Eu-Fou}$_{1}$ is convergent and
for any $k$ we have
\[
 \left\|\frac{dv_k}{dt}(t) \right\|\le \|v(t)\|_{l^2}^2  \|k\|.
\]

System \eqref{Eu-Fou} enjoys an important property: formally
the energy $E(t):=\frac 12 \sum_k \|v_k(t)\|^2$ is conserved under the
dynamics given by \eqref{Eu-Fou}. Indeed,
$$
\frac{d}{dt}  \|v_k(t)\|^2=\sum_{h}
2\Re
\left\{-i \frac{\left<v_h(t),k \right>}{1+\alpha\|h\|^2}
\langle P_k(v_{k-h}(t)),v_k(t)\rangle \right\}
$$
Summing over all components, we formally obtain
$$
 \frac{dE}{dt}  (t)=\sum_{k} \sum_{h}
 \Im \left\{ \frac{\langle v_h(t),k \rangle }{1+\alpha\|h\|^2}
 \langle v_{k-h}(t),v_k(t)\rangle\right\}
$$
which vanishes, since the sum contains terms which cancel each other
according to the following equality
\begin{equation}\label{eq:bilancio_formale}
\frac{\langle v_{h'},k' \rangle}{1+\alpha\|h'\|^2}
 \langle v_{k'-h'},v_{k'}\rangle
=\overline{\frac{\langle v_h,k \rangle}{1+\alpha\|h\|^2}
\langle v_{k-h},v_k\rangle}
\end{equation}
for $h'=-h$ and $k'=k-h$.

Let us finally notice that conservation of energy formally 
holds also for $\alpha=0$.

\section{The stochastic Leray-$\alpha$ model in Fourier components}
We are interested in a stochastic equation obtained from
\eqref{Eu-Fou} by adding a random forcing term in such a way that
energy is formally conserved. To this end we consider the system of
Stratonovich equations
\begin{equation}\label{eq:strat}
 dY_k(t) = -i \sum_{h\in\mathbb{Z}^3}P_k(Y_{k-h}(t))
 \frac{\langle Y_{h}(t)dt+\sigma \circ dW_h(t),k\rangle}{1+\alpha\|h\|^2} ,
 \qquad k \in \Z^3, k\neq \vec 0
\end{equation}
where
$\{W_h\}_{h\in \Z^3}$ is a family of independent $\C^3$-valued Brownian motions
on a filtered probability space $(\Omega,(\mathcal {F}_{t})_{t\ge 0}, P)$,
except for $\left<W_{h}(t),h\right>=0, W_{-h}(t)=\overline{W_h}(t)$.
According to the properties of Stratonovich integral 
(see \cite{Ikeda-Watanabe}) we have formally that $dE(t)=0 $. 
The computations are similar to the previous ones, 
using \eqref{eq:bilancio_formale} and 
\[
 \langle Y_{k-h},Y_k\rangle \langle \sigma \circ dW_{-h},k\rangle =
 \overline{\langle Y_{k},Y_{k-h}\rangle \langle \sigma \circ dW_{h},k\rangle}.
\]

Let us make precise the Stratonovich  formulation of system 
\eqref{eq:strat} in order to write it in terms of It\^o integrals.
Indeed, the
Stratonovitch formulation gives insights on the behaviour of the
system, but computations will be done on the It\^o formulation.

Set
$$
J:=\{(k_1,k_2,k_3)\in\Z^3 :
k_1>0 \text{ or } ( k_1=0, k_2>0) \text{ or } ( k_1=0, k_2=0, k_3>0)\}.
$$
Let $\{W'_h\}_{h\in J}$ be a family of independent $\C^3$-valued
standard Brownian motions. Define for any $h\in J$
\begin{equation}\label{da-w'-a-w}
 W_h:=P_h(W'_h),
\qquad  W_{-h}:=\overline{W}_{h}
\end{equation}
Therefore $\left<W_{h},h\right>=0, W_{-h}=\overline{W}_{h}$ for any $h \in \Z^3, h \neq\vec 0$.
Hence it is enough to give the family $\{W'_h\}_{h\in J}$ in order to define the stochastic part of system \eqref{eq:strat}.

Next, define
\begin{equation}\label{da-w-a-tildetilde}
\widetilde{\widetilde{W}}_{h,k}(t):=
\left<W_{h}(t),k\right>
\end{equation}
then each $\widetilde{\widetilde{W}}_{h,k}$ is a $\C$-valued Brownian
motion, whose real and imaginary part are independent.
Since
$\widetilde{\widetilde{W}}_{h,k}(t)=\left<W_{h}(t),k\right>
=\left<P_h(W'_h(t)),k\right>=\left<W'_h(t),P_h(k)\right>$,
then
$$
 Var[\Re\widetilde{\widetilde{W}}_{h,k}(1)]
 =Var[\Im\widetilde{\widetilde{W}}_{h,k}(1)]
 =\|P_{h}(k)\|^2=\|k\|^2 \sin^2(\theta)
$$
where  $\theta$ is the angle between $h$ and $k$.
Now, setting
\begin{equation}\label{da-tildetilde-a-tilde}
\widetilde{W}_{h,k} \| P_h(k)\| :=
\widetilde{\widetilde{W}}_{h,k}
\end{equation}
we have defined standard real Brownian motions $\Re\widetilde{W}_{h,k}$
and $\Im\widetilde{W}_{h,k}$.

Set
\begin{equation}\label{sigmah}
\sigma_h:=\dfrac{\sigma}{1+\alpha\|h\|^2}.
\end{equation}
Therefore \eqref{eq:strat} is
\begin{equation}
\begin{split}
 dY_k(t) =& -i\sum_{h\in\mathbb{Z}^3}
  \frac{\left<Y_{h}(t) ,k\right>}{1+\alpha\|h\|^2} P_k(Y_{k-h}(t))dt\\
 &  -i\sum_{h\in\mathbb{Z}^3}
  \sigma_{h} \|P_h(k)\| P_k(Y_{k-h}(t))\circ d\widetilde{W}_{h,k}(t)
 \label{eq:termine_di_stratonovich}
\end{split}
\end{equation}
Indeed, for the Stratonovich integral we have
\begin{equation}
\begin{split}
\int_0^t& P_k(Y_{k-h}(s))\left<\sigma_{h}\circ dW_{h}(s),k\right>
\\&=
\sigma_{h}P_k \int_0^t Y_{k-h}(s)\circ d\widetilde{\widetilde{W}}_{h,k}(s)\\
&= \sigma_{h} \|P_h(k)\| P_k \int_0^t Y_{k-h}(s)\circ
d\widetilde{W}_{h,k}(s)\\
&= \sigma_{h} \|P_h(k)\| P_k \int_0^t \Re Y_{k-h}(s)\circ d\Re\widetilde{W}_{h,k}(s)
  -\sigma_{h} \|P_h(k)\| P_k \int_0^t \Im Y_{k-h}(s)\circ
  d\Im\widetilde{W}_{h,k}(s)\\
& \;+i\sigma_{h}\|P_h(k)\| P_k \int_0^t \Re Y_{k-h}(s)
  \circ d\Im\widetilde{W}_{h,k}(s)
+i\sigma_{h} \|P_h(k)\| P_k \int_0^t \Im Y_{k-h}(s)\circ
 d\Re\widetilde{W}_{h,k}(s).
\end{split}
\end{equation}

We have the corresponding It\^o formulation.
\begin{thm}
Let $\{Y_k\}_{k \in \Z^3, k \neq \vec 0}$ be a sequence of  continuous and adapted 
processes defined on
a given filtered probability space such that $\sum_k \|Y_k(t)\|^2<\infty$ 
a.s.. 
If the sequence  solves the following system 
\begin{equation}\label{ito}
\begin{split}
 dY_k(t) = &-i\sum_{h\in\mathbb{Z}^3}
\frac{\langle Y_{h}(t) ,k\rangle}{1+\alpha\|h\|^2} P_k(Y_{k-h}(t))dt\\
&  -i\sum_{h\in\mathbb{Z}^3}  
\sigma_{h} \|P_h(k)\| P_k(Y_{k-h}(t)) d\widetilde{W}_{h,k}(t)\\
&  -\sum_{h\in\mathbb{Z}^3 }
\sigma_{h}^2 \|P_h(k)\|^2  P_k(P_{k-h}(Y_{k}(t)))dt, \;\qquad \forall k
\end{split}
\end{equation}
then it solves system
\eqref{eq:termine_di_stratonovich}.
\end{thm}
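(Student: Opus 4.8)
The plan is to read this theorem as the standard It\^o--Stratonovich conversion, run in the direction It\^o $\Rightarrow$ Stratonovich. Writing $A_k$ for the nonlinear drift $-i\sum_h \frac{\langle Y_h,k\rangle}{1+\alpha\|h\|^2}P_k(Y_{k-h})$ and $S_{h,k}:=-i\sigma_h\|P_h(k)\|P_k(Y_{k-h})$ for the diffusion coefficient, system \eqref{eq:termine_di_stratonovich} reads $dY_k = A_k\,dt + \sum_h S_{h,k}\circ d\widetilde{W}_{h,k}$, while \eqref{ito} reads $dY_k = A_k\,dt + \sum_h S_{h,k}\,d\widetilde{W}_{h,k} + C_k\,dt$ with $C_k:=-\sum_h\sigma_h^2\|P_h(k)\|^2 P_k(P_{k-h}(Y_k))$. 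By the definition of the Stratonovich integral, $\sum_h S_{h,k}\circ d\widetilde{W}_{h,k} = \sum_h S_{h,k}\,d\widetilde{W}_{h,k} + \tfrac12\sum_h d\langle S_{h,k},\widetilde{W}_{h,k}\rangle$, so the whole statement reduces to verifying the single identity $C_k\,dt = \tfrac12\sum_h d\langle S_{h,k},\widetilde{W}_{h,k}\rangle$: granting it, I rewrite $\sum_h S_{h,k}\,d\widetilde{W}_{h,k} = \sum_h S_{h,k}\circ d\widetilde{W}_{h,k} - C_k\,dt$ and substitute into \eqref{ito}, whereupon the two $C_k\,dt$ terms cancel and exactly \eqref{eq:termine_di_stratonovich} remains. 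Splitting real and imaginary parts as in the displayed expansion preceding the theorem lets me treat each $\widetilde{W}_{h,k}$ as a complex Brownian motion with independent standard real and imaginary parts.

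The core of the argument is the covariation computation. Since $S_{h,k}$ depends on $Y_{k-h}$, I need $d\langle Y_{k-h},\widetilde{W}_{h,k}\rangle$, obtained by inserting the It\^o diffusion of $Y_{k-h}$ (read off from \eqref{ito} with $k\mapsto k-h$) and keeping only the summands whose driving Brownian motion is correlated with $\widetilde{W}_{h,k}$. Because the family $\{W_h\}$ is independent apart from the relation $W_{-h}=\overline{W}_h$, only the indices $h'=h$ and $h'=-h$ can contribute. The decisive observation is that $\langle W_h,h\rangle=0$ forces $\widetilde{\widetilde{W}}_{h,k-h}=\widetilde{\widetilde{W}}_{h,k}$ and $\widetilde{\widetilde{W}}_{-h,k-h}=\overline{\widetilde{\widetilde{W}}}_{h,k}$, and since $\|P_h(k-h)\|=\|P_h(k)\|$ these normalize to $\widetilde{W}_{h,k-h}=\widetilde{W}_{h,k}$ and $\widetilde{W}_{-h,k-h}=\overline{\widetilde{W}}_{h,k}$. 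Writing $Z:=\widetilde{W}_{h,k}=R+iI$ with $R,I$ independent standard, the bilinear complex bracket (matching the product rule fixed in Section 2) gives $d\langle Z,Z\rangle=(dt-dt)+i\,0=0$ while $d\langle\overline{Z},Z\rangle=(dt+dt)+i\,0=2\,dt$; hence the $h'=h$ term drops out and only $h'=-h$ survives. That surviving summand carries $Y_{(k-h)-(-h)}=Y_k$ and coefficient $\sigma_{-h}\|P_{-h}(k-h)\|=\sigma_h\|P_h(k)\|$, yielding $d\langle Y_{k-h},\widetilde{W}_{h,k}\rangle=-2i\,\sigma_h\|P_h(k)\|\,P_{k-h}(Y_k)\,dt$.

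Feeding this back, $d\langle S_{h,k},\widetilde{W}_{h,k}\rangle = -i\sigma_h\|P_h(k)\|\,P_k\big(d\langle Y_{k-h},\widetilde{W}_{h,k}\rangle\big) = -2\,\sigma_h^2\|P_h(k)\|^2 P_k(P_{k-h}(Y_k))\,dt$, so $\tfrac12\sum_h d\langle S_{h,k},\widetilde{W}_{h,k}\rangle = -\sum_h \sigma_h^2\|P_h(k)\|^2 P_k(P_{k-h}(Y_k))\,dt = C_k\,dt$, which is the identity needed above and closes the proof. I expect the main obstacle to be precisely this covariation step: one must (i) correctly pin down that only $h'=-h$ contributes, which rests on the orthogonality $\langle W_h,h\rangle=0$ together with the conjugation symmetry $W_{-h}=\overline{W}_h$, and (ii) keep the complex bookkeeping exact --- the factor $2$ coming from $d\langle\overline{Z},Z\rangle$, the vanishing of $d\langle Z,Z\rangle$, and the sign produced by the two factors of $-i$ --- since an error in any of these would spoil the cancellation. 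The hypothesis $\sum_k\|Y_k(t)\|^2<\infty$ together with the a priori bound recorded just after \eqref{Eu-Fou} guarantees that the series defining the drift and the quadratic variations converge and that the term-by-term manipulations above are legitimate.
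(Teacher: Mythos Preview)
Your argument is correct and follows the same overall strategy as the paper: both compute the It\^o--Stratonovich correction term and show it equals $C_k\,dt$. The execution differs in bookkeeping. The paper splits everything into real and imaginary parts (``long but clear''), so that both $h'=h$ and $h'=-h$ give nonzero contributions to each real covariation; the $h'=h$ pieces produce terms in $Y_{k-2h}$ which then cancel pairwise when the four real/imaginary combinations are summed, leaving only the $Y_k$ pieces from $h'=-h$. Your complex bilinear bracket encodes this cancellation in one stroke: since $d[Z,Z]=0$ for $Z=\widetilde{W}_{h,k}$, the $h'=h$ contribution vanishes from the outset and only $h'=-h$ survives, with $d[\overline{Z},Z]=2\,dt$ supplying the factor that the paper obtains by adding two $\tfrac12$'s. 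The two computations are equivalent; yours is more compact, while the paper's makes the individual cancellations visible.
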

\proof
We are going to prove that when the It\^o integral in the r.h.s.
 is written as a Stratonovich integral, we get \eqref{eq:termine_di_stratonovich}.
The corrective term appearing in this transformation comes from
the quadratic variation
$[\sigma_{h} \|P_h(k)\| P_k(Y_{k-h}), \widetilde{W}_{h,k}]$ 
(see \cite{Ikeda-Watanabe}).

Let us work on the real and imaginary part; this makes the proof long
but clear.
Let $k'=k-h$; from \eqref{eq:termine_di_stratonovich} we have
\[
\begin{split}
\sigma_{h}\|P_h(k)\|&  P_k(\Re Y_{k'}(t))
= \sigma_{h}\|P_h(k)\|  P_k(\Re Y_{k'}(0))
+ \int_0^t \ldots \ldots ds\\
& +\sum_{h'\in\mathbb{Z}^3}
\int_0^t \sigma_{h}\|P_h(k)\|
P_k\left(\sigma_{h'}\|P_{h'}(k')\| P_{k'}(\Re Y_{k'-h'}(s))\right)
\circ d\Im\widetilde{W}_{h',k'}(s)
\\
& +\sum_{h'\in\mathbb{Z}^3}
\int_0^t \sigma_{h}\|P_h(k)\|
P_k\left(\sigma_{h'}\|P_{h'}(k')\| P_{k'}(\Im Y_{k'-h'}(s)) \right)
\circ d\Re\widetilde{W}_{h',k'}(s) .
\end{split}
\]
\[
\begin{split}
\sigma_{h}\|P_h(k)\|&  P_k(\Im Y_{k'}(t))
= \sigma_{h}\|P_h(k)\|  P_k(\Im Y_{k'}(0))
+ \int_0^t \ldots \ldots ds\\
& -\sum_{h'\in\mathbb{Z}^3} \int_0^t \sigma_{h}\|P_h(k)\|
P_k\left(\sigma_{h'} \|P_{h'}(k')\| P_{k'}(\Re Y_{k'-h'}(s))\right)
\circ d\Re\widetilde{W}_{h',k'}(s)
\\
& +\sum_{h'\in\mathbb{Z}^3} \int_0^t \sigma_{h}\|P_h(k)\|
P_k\left(\sigma_{h'} \|P_{h'}(k')\| P_{k'}(\Im Y_{k'-h'}(s))\right)
\circ d\Im\widetilde{W}_{h',k'}(s) .
\end{split}
\]
Bearing in mind that
$$
\widetilde{W}_{h',k-h} \text{\quad is independent of \quad }
\widetilde{W}_{h,k} \text{\quad \qquad if \ } h'\neq -h
\text{\ and \ } h'\neq h\
$$
and
$$
\widetilde{W}_{h',k-h} =
\overline{\widetilde{W}_{h,k}} \text{ \qquad if \ } h'= -h .
$$
$$
\widetilde{W}_{h',k-h} =
\widetilde{W}_{h,k} \text{ \qquad if \ } h'= h
$$
we are reduced to take into account only the terms with
$h'=-h$ and $h'=h$. Moreover, we use that 
$P_h(k)=P_{-h}(k-h)=P_h(k-h)$. Therefore
the corrective term for $\sigma_{h}\|P_h(k)\| P_k(\Re Y_{k-h}(s))
\circ d\Im\widetilde{W}_{h,k}(s)$ is
$$
-\frac{1}{2}\sigma_h^2\|P_h(k)\|^2
P_k\left(P_{k-h}\left(\Re Y_k(s) \right)\right)
+\frac{1}{2}\sigma_h^2\|P_h(k)\|^2
P_k\left(P_{k-h}\left(\Re Y_{k-2h}(s) \right)\right)\ ;
$$
that for $\sigma_{h}\|P_h(k)\| P_k(\Im Y_{k-h}(s))
\circ d\Re\widetilde{W}_{h,k}(s)$ is
$$
-\frac{1}{2}\sigma_h^2\|P_h(k)\|^2
P_k\left(P_{k-h}\left(\Re Y_k(s) \right)\right)
-\frac{1}{2}\sigma_h^2\|P_h(k)\|^2
P_k\left(P_{k-h}\left(\Re Y_{k-2h}(s) \right)\right)\ ;
$$
that for
$-\sigma_{h}\|P_h(k)\| P_k(\Re Y_{k-h}(s))
\circ d\Re\widetilde{W}_{h,k}(s)$ is
$$
-\frac{1}{2}\sigma_h^2\|P_h(k)\|^2
P_k\left(P_{k-h}\left(\Im Y_k(s) \right)\right)
-\frac{1}{2}\sigma_h^2\|P_h(k)\|^2
P_k\left(P_{k-h}\left(\Im Y_{k-2h}(s) \right)\right)\ ;
$$
that for
$\sigma_{h} \|P_h(k)\| P_k(\Im Y_{k-h}(s))
\circ d\Im\widetilde{W}_{h,k}(s)$ is
$$
-\frac{1}{2}\sigma_h^2\|P_h(k)\|^2
P_k\left(P_{k-h}\left(\Im Y_k(s) \right)\right)
+\frac{1}{2}\sigma_h^2\|P_h(k)\|^2
P_k\left(P_{k-h}\left(\Im Y_{k-2h}(s) \right)\right)\ .
$$
Summing up all the contributions, we get the expression given in the
Proposition. \hfill$\Box$

The aim of this paper is to study
the stochastic system \eqref{ito}
with initial data of finite energy.

\section{The linear model}
Let us consider
the linear system obtained by neglecting the nonlinear terms in \eqref{ito}:
\begin{equation}\label{eq:linear:_system}
 \left\{
 \begin{array}{llll}
  dY_k(t) &=&
  -i\sum_{h\in\mathbb{Z}^3}
 \sigma_{h} \|P_h(k)\| P_k(Y_{k-h}(t)) d\widetilde{B}_{h,k}(t)\\
 & &-\sum_{h\in\mathbb{Z}^3}
 \sigma_{h}^2 \|P_h(k)\|^2  P_k(P_{k-h}(Y_{k}(t)))dt \  & \\\ \\
 \left<Y_k(t),k\right> & = & 0 & \\
 &&&  \\
 Y_{-k}(t) & = & \overline{Y_k(t)}  \\\ \\
 Y_k(0) & = & y_k
 \end{array}
 \right.
\end{equation}
for each $k\neq \overrightarrow{0}$.
Here, $\{\widetilde{B}_{h,k}\}$ is a family of $\C$-valued Brownian motions obtained from
a family of independent $\C^3$-valued standard Brownian motions $\{B'_h\}_{h \in J}$
defined on a filtered probability space
$(\Omega, \{\mathcal F_t\}_{t \ge 0}, Q)$ with  the same procedure presented in
\eqref{da-w'-a-w}-\eqref{da-tildetilde-a-tilde}.

In the next section, we will see how Girsanov transform allows to pass
from the linear to the nonlinear system.

Notice that if $\E^Q\int_0^T \|Y(t)\|_{l^2}dt<\infty$, 
then the terms
in the r.h.s. of \eqref{eq:linear:_system}$_{1}$ are well defined. Indeed,
for the It\^o integrals we use that
\[\begin{split}
 \E^Q  \|\sum_{h\in\mathbb{Z}^3} \sigma_{h} \|P_h(k)\|
& \int_0^t P_k(Y_{k-h}(s) d\widetilde{B}_{h,k}(s) \|^2
\\
&=
 \E^Q  \sum_{h\in\mathbb{Z}^3}
 \sigma_{h}^2 \|P_h(k)\|^2 \int_0^t \|P_k(Y_{k-h}(s))\|^2 ds
\\&\le
 \sigma^2 \|k\|^2 \E^Q \int_0^t \sum_{h\in\mathbb{Z}^3}\|Y_{k-h}(s)\|^2 ds
\\&
 =\sigma^2 \|k\|^2 \E^Q \int_0^t\|Y(s)\|_{l^2}^2 ds
\end{split}\]
and for the deterministic integrals ($Q$-a.s.)
\[\begin{split}
 \| \int_0^t\sum_{h\in\mathbb{Z}^3}
 \sigma_{h}^2 \|P_h(k)\|^2 & P_k(P_{k-h}(Y_{k}(s)))\ ds \|
\\&\le \int_0^t\|\sum_{h\in\mathbb{Z}^3}
\sigma_{h}^2 \|P_h(k)\|^2  P_k(P_{k-h}(Y_{k}(s)))\|ds
\\
 &\le
 (\sum_{h\in\mathbb{Z}^3}\sigma_{h}^2)\|k\|^2 \int_0^t \|Y_{k}(s)\| ds
\end{split}\]

with $\sum_{h\in\Z^3} \sigma_{h}^2=
\sum_{h\in\Z^3} \frac{\sigma^2 }{\left(1+\alpha\|h\|^2\right)^2}<+\infty$.

We are interested in the stochastic system \eqref{eq:linear:_system} 
with deterministic initial data $y=\{y_k\}_k$ of finite energy.
We shall deal with strong solution in the probabilistic sense, that is the
filtered probability space $(\Omega, \{\mathcal F_t\}_{t \ge 0}, Q)$
and the Brownian motions $\{B'_h\}_{h \in J}$ are given a priori.
We shall prove existence and uniqueness of solutions of the following type.

\begin{definition}
Given $y \in l^2$, an energy controlled strong solution for system 
\eqref{eq:linear:_system}
is a family of continuous and adapted $\C^3$-valued stochastic processes 
$\{Y_k\}_{k\in\Z^3}$ such that
 for all $t \ge 0$
$$
 \left\{
 \begin{array}{llll}
  Y_k(t) &=&y_k
  -i\sum_{h\in\mathbb{Z}^3}
 \sigma_{h} \|P_h(k)\| \int_0^t P_k(Y_{k-h}(s)) d\widetilde{B}_{h,k}(s)\\
 & &-\sum_{h\in\mathbb{Z}^3}
 \sigma_{h}^2 \|P_h(k)\|^2 \int_0^t P_k(P_{k-h}(Y_{k}(s)))ds \  & \\\ \\
 \left<Y_k(t),k\right> & = & 0 & \\
 &&&  \\
 Y_{-k}(t) & = & \overline{Y_k(t)}  \\
 \end{array}
 \right.
$$
$Q$-a.s. for all $k$,
and for any $t >0$
$$
 \sum_{k\in\Z^3}\|Y_k(t)\|^2\leq \sum_{k\in\Z^3}\|y_k\|^2 \qquad Q-a.s.
$$
\end{definition}

\subsection{Existence of a strong solution}

\begin{thm}\label{teo-es-lin}
For any initial data  of finite energy, there exists an
energy controlled strong solution to system
 \eqref{eq:linear:_system}.
\end{thm}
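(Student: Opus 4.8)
The plan is to construct the solution by a Galerkin-type truncation in Fourier modes combined with a compactness (tightness) argument, followed by identification of the limit. First I would fix a large integer $N$ and consider the finite-dimensional system obtained from \eqref{eq:linear:_system} by keeping only the modes $k$ with $\|k\|\le N$ and, in the sums over $h$, only those $h$ with $\|h\|\le N$ and $\|k-h\|\le N$; the reality constraint $Y_{-k}=\overline{Y_k}$ and the incompressibility constraint $\langle Y_k,k\rangle=0$ are preserved by the truncated drift and diffusion coefficients, since $P_k$ and $P_{k-h}$ map onto the relevant subspaces and the structure \eqref{da-w'-a-w}--\eqref{da-tildetilde-a-tilde} of the $\widetilde B_{h,k}$ respects conjugation. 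The truncated system has globally Lipschitz (indeed linear) coefficients on the finite-dimensional constraint subspace, so it admits a unique global strong solution $Y^N=\{Y^N_k\}$ on $(\Omega,\{\mathcal F_t\},Q)$ with initial datum the truncation $y^N$ of $y$.

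The key a priori bound is the energy estimate. Applying It\^o's formula to $\sum_{\|k\|\le N}\|Y^N_k(t)\|^2$, the It\^o correction term $-\sum_h \sigma_h^2\|P_h(k)\|^2 P_k(P_{k-h}(Y^N_k))$ produces, after summing against $Y^N_k$ and taking expectation, exactly the negative of the quadratic variation contributed by the stochastic integrals: this is the discrete analogue of the formal energy conservation \eqref{eq:bilancio_formale} but now with the It\^o--Stratonovich correction built in so that it becomes an \emph{equality} (in fact one checks the martingale part has zero expectation and the drift plus the bracket cancel up to the sign, yielding $\E^Q\sum_k\|Y^N_k(t)\|^2=\sum_k\|y^N_k\|^2\le\|y\|_{l^2}^2$). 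A pathwise refinement, again via It\^o's formula and the cancellation, gives $\sum_k\|Y^N_k(t)\|^2\le\|y\|_{l^2}^2$ $Q$-a.s.\ for every $t$, uniformly in $N$; this is precisely the energy-control inequality required in the definition.

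With this uniform bound I would establish tightness of the laws of $(Y^N)_N$ on a suitable path space. Fix $k$: from the equation for $Y^N_k$, the drift term is bounded in absolute value by $(\sum_h\sigma_h^2)\|k\|^2\|Y^N_k(s)\|\le C_k\|y\|_{l^2}$, and the stochastic integral has, by the Burkholder--Davis--Gundy inequality and the energy bound, moments controlled uniformly in $N$ (its bracket is $\le\sigma^2\|k\|^2\int_0^t\|Y^N(s)\|_{l^2}^2\,ds\le\sigma^2\|k\|^2\|y\|_{l^2}^2 t$). Hence each coordinate $Y^N_k$ is tight in $C([0,T];\C^3)$; since the energy bound confines the whole sequence to the (metrizable, compact for the product topology on bounded sets) space $\{z\in l^2:\|z\|_{l^2}\le\|y\|_{l^2}\}$, a diagonal argument over $k$ gives tightness in $C([0,T];H_w)$, where $H_w$ is the ball of $l^2$ with the weak topology. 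By Prokhorov and Skorokhod I pass to a subsequence converging a.s.\ on a new probability space to a process $Y$, together with the driving Brownian motions.

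The last step is to identify $Y$ as an energy controlled strong solution. Each fixed mode equation involves only finitely many modes in its drift but infinitely many in the diffusion sum over $h$; the convergence of the finite-variation terms is immediate from a.s.\ convergence of finitely many coordinates, while for the infinite stochastic sum I would use the uniform bound $\E\sum_h\sigma_h^2\|P_h(k)\|^2\int_0^t\|P_k(Y^N_{k-h})\|^2ds\le\sigma^2\|k\|^2\|y\|_{l^2}^2t$ to truncate the tail uniformly and pass to the limit term by term, a standard stochastic-integral convergence lemma. The constraints $\langle Y_k,k\rangle=0$ and $Y_{-k}=\overline{Y_k}$ and the energy inequality pass to the limit trivially. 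The main obstacle I anticipate is the passage to the limit in the \emph{infinite} diffusion sum over $h$ for a fixed $k$: one must show the tail $\sum_{\|h\|>M}$ is small uniformly in $N$ in an appropriate (e.g.\ $L^2(\Omega)$ uniformly on $[0,T]$) sense, which is exactly what the summability $\sum_h\sigma_h^2<\infty$ together with the $l^2$-energy bound delivers, but it requires care to organize so that the limit process is adapted to the limiting filtration and the Skorokhod construction genuinely yields the same Brownian motions. A secondary technical point is upgrading the energy inequality from "for a.e.\ $t$" to "for all $t$", handled by continuity of $Y_k$ in each coordinate plus Fatou in the mode sum.
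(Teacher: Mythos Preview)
Your Galerkin truncation and the energy-conservation step match the paper's argument. The divergence is at the compactness stage, and there it becomes a genuine gap for the statement as formulated.

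You pass to the limit via tightness of laws, Prokhorov, and Skorokhod. That transports everything to a \emph{new} probability space and produces only a weak (martingale) solution there. But the theorem asserts existence of a \emph{strong} solution on the originally prescribed space $(\Omega,\{\mathcal F_t\},Q)$ with the given Brownian motions $\{B'_h\}$. You flag the issue yourself (``the Skorokhod construction genuinely yields the same Brownian motions'') but do not resolve it. From your route the only way to close this is to first prove pathwise uniqueness and then invoke a Yamada--Watanabe argument to upgrade weak existence to strong existence; that reverses the paper's logical order and is not what you wrote.

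The paper sidesteps the whole difficulty by exploiting the \emph{linearity} of the system. From the uniform energy bound $\|Y^N(t)\|_{l^2}\le\|y\|_{l^2}$ one gets boundedness of $\{Y^N\}$ in $L^p(\Omega\times[0,T];l^2)$ for every $p<\infty$, hence a subsequence converging weakly in $L^2(\Omega\times[0,T];l^2)$ and weak-$\star$ in $L^\infty$, all \emph{on the original probability space}. The key point is that for a fixed Brownian motion the It\^o map $f\mapsto\int_0^\cdot f\,d\widetilde B_{h,k}$ is a bounded linear operator on $L^2(\Omega\times[0,T])$ (It\^o isometry), hence weakly continuous; combined with the uniform bound $\sum_h\sigma_h^2\|P_h(k)\|^2\,\E^Q\!\int_0^T\|P_k(Y^N_{k-h})\|^2\,ds\le\sigma^2\|k\|^2\|y\|_{l^2}^2$, this lets the full stochastic sum pass to the limit term by term without ever leaving the given filtration. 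This is the Krylov--Rozovskii/Pardoux variational method, and it delivers a strong solution directly, with no Skorokhod representation and no appeal to uniqueness. Your tightness machinery is heavier than needed here precisely because the equation is linear; for the nonlinear system it would be natural, but there the paper uses Girsanov instead.
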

\proof
We consider the finite dimensional system associated to
\eqref{eq:linear:_system}; for each integer $N>0$ this is obtained
by neglecting the components of index higher than $N$ in such a way
that the energy is conserved.
Set $\Gamma_N^k=\{h \in \Z^3: 0<\|h\|<N, 0<\|h-k\|<N\}$. Then the linear Galerkin system is
\begin{equation}\label{eq:linear:_system_finit}
\left\{
\begin{array}{llll}
 dY_k(t) &=&
 -i\sum_{h\in\Gamma_N^k}
 \sigma_{h} \|P_h(k)\| P_k(Y_{k-h}(t)) d\widetilde{B}_{h,k}(t)\\
 & &-\sum_{h\in\Gamma_N^k}
 \sigma_{h}^2 \|P_h(k)\|^2  P_k(P_{k-h}(Y_{k}(t)))dt \  & \\\ \\
 \langle Y_k(t),k\rangle & = & 0 & \\
&&&  \\
Y_{-k}(t) & = & \overline{Y(t)_k}  \\\ \\
Y_k(0) & = & y_k
\end{array}
\right.
\end{equation}
for each $k\in\Z^3, 0<\|k\|<N$.
We consider inital data $Y^N(0)=y^N$
obtained from the inital data $y$ of the full system \eqref{eq:linear:_system}
by putting to 0 the components $y_k$ with $\|k\|\ge N$.

By linearity the Galerkin system  has a unique global strong solution
$Y^N=\{Y^N_k\}_{0<\|k\|<N}$. Each component is a continuous and adapted process.
Moreover, energy is conserved, that is for any $t >0$
$$
 d \sum_{\|k\|<N} \|Y^N_k(t)\|^2=0 \qquad Q-\text{a.s.}
$$
To prove it, again we use the properties of the family 
$\{\widetilde{B}_{h,k}\}$ of Brownian motions and the projector
defined in \eqref{proiettore}.

Therefore, for an inital data of finite energy we have for any $t>0$
\begin{equation}\label{cons-energia}
 \|Y^N(t)\|_{l^2}=\|y^N\|_{l^2}\le \|y\|_{l^2} \quad Q-a.s.
\end{equation}
This implies that for any $p \in [1,\infty)$ we have
\begin{equation}\label{ineq-en}
  \mathbb E^Q \int_0^T \|Y^N(t)\|^p_{l^2}dt=\|y^N\|^p_{l^2}\le \|y\|^p_{l^2} \qquad
  \forall N .
\end{equation}
Therefore the sequence $\{Y^N\}_N$ is a bounded sequence
 in $L^p(\Omega\times[0,T];l^2)$
for any $1\le p\le\infty$.
This implies that there exists a sequence  $\{Y^{N_i}\}_{i=1}^\infty$ and a
process $Y\in L^\infty(\Omega\times[0,T];l^2)$ such that
\[
 \lim_{i\to \infty} Y^{N_i} =Y \;\text{ weakly in }L^p(\Omega\times[0,T];l^2) \qquad \text{ for } p<\infty
\]
and
\[
 \lim_{i\to \infty} Y^{N_i} =Y \;\; \star\text{-weakly in
 }L^\infty(\Omega\times[0,T];l^2) .
\]
In particular
\[
 \lim_{i\to \infty} Y_k^{N_i} =Y_k \text{ weakly in } L^2(\Omega\times[0,T]).
\]

Now we consider the convergence of the integrals in the r.h.s. 
of \eqref{eq:linear:_system_finit}$_1$.
\\
The It\^o integral, considered as  a linear operator,
 is strongly continuous in $L^2(\Omega\times [0,T])$; hence it is
 weakly continuous (see .e.g. \cite{krylov-rozovskii,pardoux}). 
This implies that each stochastic 
integral  converges weakly:
\[
 \lim_{N\to \infty} \int_0^\cdot P_k(Y^N_{k-h}(s))d\tilde B_{h,k}(s) 
 =\int_0^\cdot P_k(Y_{k-h}(s))d\tilde B_{h,k}(s)
\qquad \text{ weakly in }L^2(\Omega\times[0,T])
\]

On the other side, using the independence of the It\^{o} integrals 
and the It\^{o} isometry we have
\[\begin{split}
 \mathbb E^Q \| \sum_{\|h\|<N} \sigma_h \|P_h(k)\|&
 \int_0^T P_k(Y^{N}_{k-h}(s))  d\widetilde{B}_{h,k}(s)\|^2
\\&
 =\sum_{\|h\|<N} \sigma_h^2 \|P_h(k)\|^2 
   \mathbb E^Q\|\int_0^T P_k(Y^{N}_{k-h}(s)) d\widetilde{B}_{h,k}(s)\|^2 
\\&
 = \sum_{\|h\|<N} \sigma_h^2 \|P_h(k)\|^2 
   \mathbb E^Q \int_0^T \|P_k(Y^{N}_{k-h}(s))\|^2 ds
\\&
 \le \sigma^2 \|k\|^2 \sum_{\|h\|<N}  \mathbb E^Q \int_0^T \|Y^N_{k-h}(s)\|^2 ds
\\&
 \le \sigma^2 \|k\|^2 \|y\|^2_{l^2}\quad \text{ by } \eqref{ineq-en}.
\end{split}
\]
Hence
\[
 \lim_{N\to \infty}\sum_{\|h\|<N}  
 \int_0^\cdot \sigma_h \|P_h(k)\| P_k(Y^N_{k-h}(s))d\tilde B_{h,k}(s) 
 =
 \sum_{h} \int_0^\cdot \sigma_h \|P_h(k)\| P_k(Y_{k-h}(s))d\tilde B_{h,k}(s)
\]
weakly in $L^2(\Omega\times[0,T])$.

For the deterministic integral, it is an easy computation to identify the limit:
\[
\int_0^\cdot P_k(P_{k-h}(Y^N_k(s))) ds \to 
\int_0^\cdot P_k(P_{k-h}(Y_k(s))) ds \qquad \text{weakly in }
L^2(\Omega\times [0,T]) .
\]

For the limit $Y$, we have for any $t\ge 0$
\[
  \|Y(t)\|_{l^2} \le \|y\|_{l^2} \qquad Q-a.s.
\]
Therefore $Y$ is an energy controlled strong solution. 

Moreover, using again the estimate 
\begin{equation*}
  \mathbb E^{Q}\int_0^T \|Y(t)\|^2_{l^2}dt<\infty 
\end{equation*}
in a classical way
we obtain that the process given by the 
infinite sum of It\^o integrals $ \sum_{h} 
\sigma_h \|P_h(k)\| \int_0^tP_k(Y_{k-h}(s))d\tilde B_{h,k}(s)$ has a
continuous modification. 
Hence, we conclude that the process $Y$ has a
continuous modification.
\hfill $\Box$

\subsection{The covariance matrices}
In Theorem \ref{teo-es-lin} we have proved existence of energy controlled
solutions $Y=\{Y_k\}_{k \in \Z^3}$ of \eqref{eq:linear:_system}; 
now we want to show their
uniqueness. The idea is to study the time evolution of the covariance
matrices $\{A_k\}_{k \in \Z^3}$, defined as follows:
$$
A_k^{j_1,j_2}(t)=\E^{Q}\left[\Re Y_k^{(j_1)}(t) \Re Y_k^{(j_2)}(t)
+ \Im Y_k^{(j_1)}(t) \Im Y_k^{(j_2)}(t)\right]\qquad
j_1,j_2=1,2,3
$$
We collect the properties of $A_k$.
Since $\langle Y_k(t),k\rangle=0$ for any $t$ and $k$, then $k$ is an
eigenvector for $A_k(t)$ corresponding to the 0 eigenvalue.
$A_k(t)$ is a symmetric and semi-positive definite matrix; therefore the trace of $A_k(t)$ is non negative.
Moreover, we have
\begin{equation}
 \sum_{k\in\Z^3} \text{Tr}(A_k(t)) \le \|y\|_{l^2}^2
\end{equation}
for any $t \ge 0$.
Finally, $P_k A_k(t)P_k=A_k(t)$, where $P_k$ is the real matrix
previously defined in \eqref{proiettore}, which is symmetric semi-positive
definite; $P_k$ has the 0 eigenvalue  with eigenvector $k$ and the 
eigenvalue 1 of double multiplicity.

Bearing in mind \eqref{eq:linear:_system} and the properties of the Brownian motions $\tilde B_{h,k}$,
with some long but easy computations we get that
each $A_k$ fulfils a linear equation.
\begin{prp}\label{thm:A_k}
For each $k\neq \vec 0$,
$A_k$ fulfils the differential equation
\begin{equation}\label{eq:tensore}
\begin{split}
\frac{d A_k }{dt} (t)=
&-\sum_{h\in \Z^3} \sigma_h^2\|P_h(k)\|^2 P_k P_{k-h} A_k(t)\\
&-\sum_{h\in \Z^3} \sigma_h^2\|P_h(k)\|^2 A_k(t) P_{k-h} P_{k}\\
&+2\sum_{h\in \Z^3} \sigma_h^2\|P_h(k)\|^2 P_k A_{k-h}(t) P_{k}
\end{split}
\end{equation}
\end{prp}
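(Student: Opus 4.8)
\emph{Idea of the proof.} The plan is to apply It\^o's formula to the bilinear functionals $\Re Y_k^{(j_1)}(t)\,\Re Y_k^{(j_2)}(t)+\Im Y_k^{(j_1)}(t)\,\Im Y_k^{(j_2)}(t)$, whose $Q$-expectation is by definition $A_k^{j_1,j_2}(t)$, and then to take $\E^Q$ of the resulting identity. To this end one first writes \eqref{eq:linear:_system}$_1$ in terms of its real and imaginary parts and of the real driving Brownian motions $\Re\widetilde B_{h,k}$, $\Im\widetilde B_{h,k}$; a short computation (the real/imaginary decomposition is the same one carried out in Section~3) gives $d\,\Re Y_k = -\sum_{h}\sigma_h^2\|P_h(k)\|^2 P_kP_{k-h}\,\Re Y_k\,dt+\sum_{h}\sigma_h\|P_h(k)\|\big(P_k\Re Y_{k-h}\,d\Im\widetilde B_{h,k}+P_k\Im Y_{k-h}\,d\Re\widetilde B_{h,k}\big)$, and an analogous identity for $\Im Y_k$. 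It\^o's product rule then decomposes the time derivative of each bilinear functional into a drift part, a sum of It\^o integrals, and a quadratic-covariation part.

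Taking $\E^Q$, the It\^o-integral part drops out: by the a.s.\ energy bound $\|Y(t)\|_{l^2}\le\|y\|_{l^2}$ and $\sum_h\sigma_h^2<\infty$ the stochastic integrals involved are genuine square-integrable martingales (this is the estimate already used in the proof of Theorem~\ref{teo-es-lin}), and the same bound lets one exchange $\E^Q$, $\tfrac{d}{dt}$ and the sums over $h$ via Fubini and dominated convergence. Two contributions then survive.

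The drift part: summing $\E^Q\big[\Re Y_k^{(j_1)}\,(-P_kP_{k-h}\Re Y_k)^{(j_2)}+\Im Y_k^{(j_1)}\,(-P_kP_{k-h}\Im Y_k)^{(j_2)}\big]$ over $h$ with weight $\sigma_h^2\|P_h(k)\|^2$ yields $-\sum_h\sigma_h^2\|P_h(k)\|^2\big(A_k\,(P_kP_{k-h})^{\top}\big)^{j_1,j_2}$; since $P_k$ and $P_{k-h}$ are real and symmetric one has $(P_kP_{k-h})^{\top}=P_{k-h}P_k$, and together with the symmetric term obtained by exchanging $j_1$ and $j_2$ this produces the first two terms of \eqref{eq:tensore}. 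The quadratic-covariation part: here one uses the covariance structure of $\{\widetilde B_{h,k}\}_{h\in\Z^3}$, namely that $\Re\widetilde B_{h,k}$ and $\Im\widetilde B_{h,k}$ are independent, that $\widetilde B_{h,k}$ and $\widetilde B_{h',k}$ are independent whenever $h'\neq\pm h$, and that $\widetilde B_{-h,k}=\overline{\widetilde B_{h,k}}$, so that only $h'=h$ and $h'=-h$ contribute. The $h'=h$ contribution from the $\Re\Re$ product and the one from the $\Im\Im$ product each equal $\sum_h\sigma_h^2\|P_h(k)\|^2\,(P_kA_{k-h}P_k)^{j_1,j_2}$, and their sum is precisely the third term of \eqref{eq:tensore}; the $h'=-h$ contributions, which would involve cross-covariances between the modes $k-h$ and $k+h$, occur with opposite signs in the $\Re\Re$ and $\Im\Im$ products and cancel. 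Collecting the three pieces gives \eqref{eq:tensore}.

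The computation is elementary but lengthy; the only genuinely delicate point is the bookkeeping of the $\pm h$ cross terms generated by $\widetilde B_{-h,k}=\overline{\widetilde B_{h,k}}$, which is what produces the cancellation of the $h'=-h$ quadratic-covariation terms and must be done with attention to signs. Everything else --- the martingale property of the It\^o integrals and the interchange of $\E^Q$, $\tfrac{d}{dt}$ and the $h$-sums --- is a routine consequence of the a.s.\ bound $\|Y(t)\|_{l^2}\le\|y\|_{l^2}$ and $\sum_h\sigma_h^2<\infty$.
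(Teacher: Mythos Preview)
Your argument is correct and is precisely the computation the paper has in mind: the paper itself does not write out a proof but only says that ``with some long but easy computations'' one obtains \eqref{eq:tensore} from \eqref{eq:linear:_system} and the properties of the $\widetilde B_{h,k}$. Your It\^o-formula derivation, the identification of the drift terms via the symmetry $(P_kP_{k-h})^\top=P_{k-h}P_k$, and in particular your handling of the $h'=\pm h$ quadratic-covariation contributions (with the $h'=-h$ cross terms cancelling between the $\Re\Re$ and $\Im\Im$ products) fill in exactly those computations.
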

This Proposition shows the non trivial fact that the covariance
matrices satisfy a closed differential system.

\subsection{Pathwise uniqueness}
We prove pathwise uniqueness for system \eqref{eq:linear:_system}.
\begin{thm}
 There exists at most one
energy controlled strong solution to system
 \eqref{eq:linear:_system}, that is given two
energy controlled strong solutions $Y_{[1]}$ and $Y_{[2]}$ to system
 \eqref{eq:linear:_system} defined on the same probability space
$(\Omega, \{\mathcal F_t\}_{t \ge 0}, Q)$ and
with the same initial data $y \in l^2$ and Brownian motions, we
 have for any $t\ge 0$
\[
 Y_{[1]}(t)=Y_{[2]}(t) \qquad Q-a.s.
\]
\end{thm}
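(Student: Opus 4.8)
The plan is to reduce the pathwise uniqueness of the (infinite) linear SDE to a uniqueness statement for the closed \emph{deterministic} system \eqref{eq:tensore} satisfied by the covariance matrices. First, set $Z_k:=Y_{[1],k}-Y_{[2],k}$. Since \eqref{eq:linear:_system} is linear and the two solutions are driven by the same Brownian motions, $\{Z_k\}$ is again a family of continuous adapted $\C^3$-valued processes solving the first equation of \eqref{eq:linear:_system} with vanishing initial datum $Z_k(0)=0$; moreover $\|Z(t)\|_{l^2}\le \|Y_{[1]}(t)\|_{l^2}+\|Y_{[2]}(t)\|_{l^2}\le 2\|y\|_{l^2}$ $Q$-a.s., so $Z$ has all the integrability (in particular $\E^Q\int_0^T\|Z(t)\|_{l^2}^2\,dt<\infty$) needed to justify the It\^o computations below.

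Introduce the covariance matrices of $Z$, $C_k^{j_1,j_2}(t):=\E^Q[\Re Z_k^{(j_1)}(t)\Re Z_k^{(j_2)}(t)+\Im Z_k^{(j_1)}(t)\Im Z_k^{(j_2)}(t)]$. The computation in the proof of Proposition \ref{thm:A_k} uses only that $Z$ solves \eqref{eq:linear:_system}$_1$ with the above integrability, not that it is energy controlled, so $\{C_k\}$ solves \eqref{eq:tensore}. These matrices are symmetric and positive semidefinite, have range in $k^\perp$ (hence $P_kC_k(t)P_k=C_k(t)$), satisfy $\sum_k\text{Tr}\,C_k(t)=\E^Q\|Z(t)\|_{l^2}^2\le 4\|y\|_{l^2}^2$ for every $t$, and $C_k(0)=0$. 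It therefore suffices to prove that the only solution of \eqref{eq:tensore} in this class (symmetric positive semidefinite, uniformly bounded total trace, vanishing at $t=0$) is $C_k\equiv0$: indeed then $\E^Q\|Z_k(t)\|^2=\text{Tr}\,C_k(t)=0$, so $Z_k(t)=0$ $Q$-a.s. for each fixed $k$ and rational $t$, and by path continuity $Y_{[1]}(t)=Y_{[2]}(t)$ for all $t\ge0$, $Q$-a.s.

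For the uniqueness of \eqref{eq:tensore} I would take the trace of \eqref{eq:tensore}: using $P_kC_k=C_kP_k=C_k$ and cyclicity of the trace one gets, with $B_k:=\sum_{h}\sigma_h^2\|P_h(k)\|^2P_{k-h}\ge 0$,
\[
\frac{d}{dt}\text{Tr}\,C_k(t)=-2\,\text{Tr}\big(C_k(t)B_k\big)+2\sum_{h}\sigma_h^2\|P_h(k)\|^2\,\text{Tr}\big(P_kC_{k-h}(t)\big).
\]
The first term is a nonnegative loss, the second a nonnegative gain. Since $C_k\ge0$ has range in $k^\perp$, $\text{Tr}(C_kB_k)\ge\lambda_k\,\text{Tr}\,C_k$, where $\lambda_k$ is the smallest eigenvalue of $B_k$ restricted to $k^\perp$; keeping only the six terms with $\|h\|=1$ in $B_k$ one checks that $\lambda_k>0$ and in fact $\lambda_k\ge c\,\|k\|^2$ for a constant $c=c(\sigma,\alpha)>0$. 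Bounding the gain by $2\sum_h\sigma_h^2\|P_h(k)\|^2\text{Tr}\,C_{k-h}$ and using $C_k(0)=0$ gives the integral inequality $\text{Tr}\,C_k(t)\le 2\int_0^t e^{-2\lambda_k(t-s)}\sum_h\sigma_h^2\|P_h(k)\|^2\,\text{Tr}\,C_{k-h}(s)\,ds$. Summing over $k$, re-indexing $k\mapsto k-h$ and using $\|P_h(k-h)\|=\|P_h(k)\|$ together with the uniform bound $\int_0^\infty e^{-2\lambda_{k'+h}\tau}\|P_h(k')\|^2\,d\tau\le \|k'+h\|^2/(2\lambda_{k'+h})\le 1/(2c)$, one controls the resulting convolution kernel uniformly in $k'$, and a Gronwall argument (iterated in time, or after a suitable polynomial reweighting of the modes to fully exploit the $\|k\|^2$-dissipativity) forces $\sum_k\text{Tr}\,C_k\equiv0$, hence $C_k\equiv0$.

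The delicate point, and the place where I expect the real work to be, is exactly this last deterministic step: the generator of \eqref{eq:tensore} is \emph{unbounded} — the coupling coefficients $\sigma_h^2\|P_h(k)\|^2$ grow like $\|k\|^2$ — so an infinite linear ODE system of this type need not have a unique solution, and the a priori bound $\sum_k\text{Tr}\,C_k(t)\le 4\|y\|_{l^2}^2$ by itself is not sufficient; the smoothing effect of the diagonal part $-2\,\text{Tr}(C_kB_k)$, quantified by $\lambda_k\ge c\|k\|^2$, is what must be used to close the estimate. By contrast, the reduction to $Z$, the derivation of \eqref{eq:tensore} for $C_k$, and the passage from $\text{Tr}\,C_k\equiv0$ back to pathwise equality of $Y_{[1]}$ and $Y_{[2]}$ are routine.
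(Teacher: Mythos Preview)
Your reduction step --- passing to $Z=Y_{[1]}-Y_{[2]}$, forming the covariance matrices $C_k$, invoking Proposition~\ref{thm:A_k} to get a closed deterministic system with $C_k(0)=0$ and summable traces, and then deducing pathwise equality from $\text{Tr}\,C_k\equiv 0$ --- is exactly the paper's strategy, and that part is fine.

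The gap is in the deterministic uniqueness step, which you yourself flag as delicate but do not actually complete. Your trace inequality, after summing over $k$ and using the bound $\int_0^\infty e^{-2\lambda_{k'+h}\tau}\|P_h(k'+h)\|^2\,d\tau\le 1/(2c)$, yields only an estimate of the form $\sum_k\text{Tr}\,C_k(t)\le (\Sigma/c)\sup_{s\le t}\sum_k\text{Tr}\,C_k(s)$ with $\Sigma=\sum_h\sigma_h^2$, and there is no reason for $\Sigma/c<1$. The promised ``iteration in time'' or ``polynomial reweighting'' is not supplied, and it is not obvious how to make it work: the dissipation rate $\lambda_k$ sits on the receiving mode $k=k'+h$ while the source is at $k'$, so the $\|k\|^2$ gain does not straightforwardly dominate the $\|k\|^2$-sized coupling after the shift. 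As written, the argument does not close.

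The paper avoids Gronwall entirely and instead uses a maximal-eigenvalue contradiction that exploits the full matrix structure, not just the trace. Set $B_k:=\int_0^T C_k(t)\,dt$; then $\sum_k\text{Tr}\,B_k<\infty$, so $\text{Tr}\,B_k\to 0$ as $\|k\|\to\infty$. Integrating \eqref{eq:tensore} over $[0,T]$ and testing against a unit eigenvector $\phi$ of $B_{k_n}$ for its top eigenvalue $\lambda_n^*$, the positivity of $C_{k_n}(T)$ forces
\[
0\le 2\sum_{h}\sigma_h^2\|P_h(k_n)\|^2\big[\langle\phi,B_{k_n-h}\phi\rangle-\lambda_n^*\langle\phi,P_{k_n-h}\phi\rangle\big].
\]
Since $\langle\phi,B_{k_n-h}\phi\rangle\to 0$ as $\|h\|\to\infty$, some addend is positive; choosing that $\tilde h$ and setting $k_{n+1}=k_n-\tilde h$, $\psi=P_{k_{n+1}}\phi$, one gets $\langle\psi,B_{k_{n+1}}\psi\rangle>\lambda_n^*\|\psi\|^2$, hence $\lambda_{n+1}^*>\lambda_n^*$. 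The resulting strictly increasing sequence of top eigenvalues contradicts $\text{Tr}\,B_{k_n}\to 0$. No quantitative dissipation bound like your $\lambda_k\ge c\|k\|^2$ is needed.
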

\proof
Define
$$
 Y:=Y_{[1]}-Y_{[2]} .
$$
The idea of the proof is to take the difference $Y=Y_{[1]}-Y_{[2]}$;
by linearity $Y$ solves \eqref{eq:linear:_system} but with initial
data $Y(0)=0$. Let $\{A_k\}_{k \in \Z^3}$ be the covariance matrices  of $Y$; 
these matrices satisfy 
\eqref{eq:tensore} with zero initial condition and regularity
\eqref{tracciafinita}. Thus, the uniqueness problem for 
\eqref{eq:linear:_system} is
transformed in the easier uniqueness problem for the deterministic
system \eqref{eq:tensore}.
Indeed, in order to  show that for any $t>0$ we have $Y(t)=0$ $Q$-a.s.
it is enough to prove that system \eqref{eq:tensore}
with zero initial condition has the unique solution which vanishes,
i.e. for any $ k \neq \vec 0$,  given $A_k(0)=0$ we have
$A_k(t)=0$ for all $t>0$.

For any $T>0$ define
$$
B_k:=\int_0^T A_k(t) dt.
$$
Each tensor $B_k$ enjoys the same properties of $A_k$. Since
\begin{equation}\label{tracciafinita}
 \sum_{k\in\Z^3} \text{Tr}(A_k(t)) \le4 \|y\|_{l^2}^2 ,
\end{equation}
then
\begin{equation*}
\sum_{k\in \Z^3} \text{Tr}(B_k) \le 4 \|y\|_{l^2}^2 T;
\end{equation*}
thus
\begin{equation}\label{limiteBk}
 \lim_{k\to \infty} \text{Tr}(B_k)=0 .
\end{equation}

Writing equation \eqref{eq:tensore} in the integral form we have
\begin{equation}\label{eq:A_kT}
 A_k (T)=\sum_{h\in \Z^3} \sigma_h^2\|P_h(k)\|^2 \big[
 2 P_k  B_{k-h}  P_{k} - P_k  P_{k-h}  B_{k} -  B_{k} P_{k-h}  P_k \big] .
\end{equation}
If  $B_h=0$ for all $h$, then $A_k(T)=0$ and the proof is completed.

We proceed by contradiction. Suppose that there exists $k_1$ such that
$B_{k_1}$ does not vanish; then the maximal eigenvalue
$\lambda^*_{1}$ of $B_{k_1}$  is
positive. Starting from $B_{k_1}$ and $\lambda^*_{1}$ we construct a
sequence of $B_{k_n}$ and $\lambda^*_{n}$'s (with $\lambda^*_{n}$ the
maximal eigenvalue of $B_{k_n}$)
such that $\{\lambda^*_n\}_{n\in\N}$ is a stricly increasing sequence.
Therefore $\lim_{n \to \infty}\lambda^*_n>0$. On the other hand,
each $B_k$ is semipositive definite and therefore
$\text{Tr}(B_{k_n})\ge \lambda^*_n $. It follows that
\[
 \lim_{n\to \infty} \text{Tr}(B_{k_n}) >0
\]
which is impossible because of \eqref{limiteBk}.

Therefore we are left to prove that given $k_n \in \N$ such that
$B_{k_n}$ has maximal eigenvalue  $\lambda^*_{n}>0$, then there exists
$k_{n+1} \in \N$ such that
$B_{k_{n+1}}$ has maximal eigenvalue  $\lambda^*_{n+1}>\lambda^*_{n}>0$.

Let $\phi_n$ be the eigenvector corresponding to
$\lambda^*_{n}$. Therefore
\begin{equation}\label{autoval}
 B_{k_n} \phi_{n}= \lambda^*_{n} \phi_{n}
\end{equation}
Moreover $\phi_n$ is orthogonal to $k_n$, since they are eigenvectors
corresponding to different eigenvalues; therefore
\begin{equation}\label{ortog}
 P_{k_n}\phi_{n}= \phi_{n}
\end{equation}
From \eqref{eq:A_kT} we have
\begin{equation*}\begin{split}
0\le \langle \phi_{n}, A_{k_n}(T) \phi_{n}\rangle
=
\sum_{h\in \Z^3} \sigma_h^2\|P_h(k)\|^2 (&
2 \langle \phi_{n},P_{k_n} B_{k_n-h} P_{k_n}\phi_{n}\rangle
\\& -\langle  \phi_{n},P_{k_n} P_{k_n-h} B_{k_n}\phi_{n}\rangle
\\& -\langle  \phi_{n},B_{k_n} P_{k_n-h} P_{k_n}\phi_{n}\rangle )
\end{split}\end{equation*}
Using that each $B_k$ is symmetric and \eqref{autoval}-\eqref{ortog}, we get
$$
0\leq
 2 \sum_{h\in \Z^3} \sigma_h^2\|P_h(k)\|^2
      \big[\langle \phi_{n},B_{k_n-h}\phi_{n}\rangle
      -\lambda^*_n \langle \phi_{n}, P_{k_n-h}\phi_{n}\rangle\big]
$$
For fixed $n$, we have that $\langle \phi_{n},B_{k_n-h}\phi_{n}\rangle$ tends to 0
as $\|h\| \to \infty$,
because  of \eqref{limiteBk}. Therefore, some addends in the sum are
negative; if the sum must be non negative there must exist at least
one addend positive, i.e.
\[
 \exists \tilde h: \ \langle \phi_{n},B_{k_n-\tilde h}\phi_{n}\rangle
  -\lambda^*_n \langle \phi_{n}, P_{k_n-\tilde h}\phi_{n}\rangle >0
\]
Set $k_{n+1}=k_n-\tilde h$. Then
\[
 \langle \phi_{n},B_{k_{n+1}}\phi_{n}\rangle
 -\lambda^*_n \langle \phi_{n}, P_{k_{n+1}} \phi_{n}\rangle >0
\]
Setting $\psi_n = P_{k_{n+1}} \phi_{n}$ and using that
$B_{k_{n+1}}=P_{k_{n+1}}B_{k_{n+1}}P_{k_{n+1}} $
we get
\[
  \langle \psi_{n},B_{k_{n+1}} \psi_{n}\rangle >
   \lambda^*_n \langle \psi_{n}, \psi_{n}\rangle
\]
which implies that the maximal eigenvalue $\lambda^*_{n+1}$ of
$B_{k_{n+1}}$ is larger than $\lambda^*_n$.
\hfill $\Box$

\section{The nonlinear model}
Consider the nonlinear system in the It\^o form 
\begin{equation}\label{eq:nl}
 \left\{\begin{aligned}
 & dY_k(t) = -i\sum_{h\in\mathbb{Z}^3}
\frac{\langle Y_{h}(t) ,k\rangle}{1+\alpha\|h\|^2} P_k(Y_{k-h}(t))dt\\
 &  \qquad\qquad  -i\sum_{h\in\mathbb{Z}^3}
 \sigma_{h} \|P_h(k)\| P_k(Y_{k-h}(t)) d\widetilde{B}_{h,k}(t)\\
&  \qquad\qquad -\sum_{h\in\mathbb{Z}^3}
 \sigma_{h}^2 \|P_h(k)\|^2  P_k(P_{k-h}(Y_{k}(t)))dt \  \\
& \left<Y_k(t),k\right>  =  0  \\
& Y_{-k}(t) =  \overline{Y_k}(t)  \\
& Y_k(0)  = y_k
 \end{aligned}
 \right.
\end{equation}
Starting from the solution of the linear system \eqref{eq:linear:_system}
we construct a solution to this nonlinear system by means of Girsanov transform.
We shall deal with solutions on any fixed finite time interval $[0,T]$.

\begin{definition}\label{def_nonlinear}
Given $y\in l^2$, a weak solution of equation \eqref{eq:nl} in $l^{2}$
is a filtered probability space 
$(\Omega,\{\mathcal F_t\}_{t \in [0,T]}, P)$, 
 a sequence of independent $\C^3$-valued Brownian motions
$W'=\{W'_h\}_{h \in J}$ on $(\Omega,\{\mathcal F_t\}_{t \in [0,T]}, P)$
and an $l^{2}$-valued stochastic process $Y:=(Y_{k})_{k\in\mathbb Z^{3}}$ on
$(\Omega,\{\mathcal F_t\}_{t \in [0,T]}, P)$, 
with continuous and adapted components $Y_{k}$ such that
for all $t \in [0,T]$ 
$$
 \left\{
\begin{aligned}
  Y_k(t) =y_k -i&\sum_{h\in\mathbb{Z}^3}\int_0^t
\frac{\langle Y_{h}(s) ,k\rangle}{1+\alpha\|h\|^2} P_k(Y_{k-h}(s))ds
\\
&  -i\sum_{h\in\mathbb{Z}^3} \frac{\sigma}{1+\alpha\|h\|^2} 
  \|P_h(k)\| \int_0^t P_k(Y_{k-h}(s)) d\widetilde{W}_{h,k}(s)\\
&-\sum_{h\in\mathbb{Z}^3} \left( \frac{\sigma}{1+\alpha\|h\|^2}\right)^2
\|P_h(k)\|^2 \int_0^t P_k(P_{k-h}(Y_{k}(s)))ds  \\\ \\
 \left<Y_k(t),k\right>  =  0  \\
 Y_{-k}(t)  =  \overline{Y_k(t)}  \\
 \end{aligned}
 \right.
$$
$P$-a.s.
for each $k\in\mathbb{Z}^{3}$. We denote this solution by
$((\Omega,\{\mathcal F_t\}_{t \in [0,T]}, P),Y,W')$.

Moreover, it is called an energy controlled weak solution if for all $t\in[0,T]$
this solution satisfies 
$$
 \sum_{k\in\Z^3}\|Y_k(t)\|^2\le \sum_{k\in\Z^3}\|y_k\|^2 \qquad P-a.s.
$$
\end{definition}
As usual, the Brownian motions $\widetilde W_{h,k}$ 
are constructed by the family $\{W'_h\}_{h \in J}$ (see 
\eqref{da-w'-a-w}-\eqref{da-tildetilde-a-tilde}).

First we present the Girsanov result.
Let $Y=\{Y_h\}_h$ be the strong energy controlled solution of \eqref{eq:linear:_system}.
Define
\[
 L(t)=\frac{1}{\sigma} \sum_{h\in J}\int_{0}^{t} Y_{h}(s)d B_{h}(s)
\]
Then $L$ is a martingale and its quadratic
variation $[L,L]$ is well defined and given by
\begin{equation}\label{LLT}
 [L,L](t)=\frac 1{\sigma^2} \int_0^t \sum_{h\in J} Y_{h}(s)^2 ds
\end{equation}
We have
\begin{prp}
Let $Y$ be the strong solution of system \eqref{eq:linear:_system}
with the family of independent $\C^3$-valued
standard Brownian motions
$\{ B'_{h}\}_{h \in J}$ on $(\Omega,\{\mathcal F_t\}_{t\ge 0}, Q)$.
Then
\begin{equation}\label{gir}
 W'_h(t)=B'_h(t)-\frac 1\sigma \int_0^t Y_h(s) ds, \qquad h \in J,\ 0\le t\le T,
\end{equation}
defines a family of independent $\C^3$-valued
standard Brownian motions on $(\Omega,\{\mathcal F_t\}_{0\le t\le T}, P)$
with the measure $P$, defined on $(\Omega,\mathcal F_T)$, which is absolutely continuous with respect to the measure $Q$ and
\[
\frac{dP}{dQ}=e^{L(T)-\frac 12 [L,L](T)}
\]
\end{prp}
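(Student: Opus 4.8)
The plan is to recognize this as a direct application of Girsanov's theorem, where the only genuine input is the a priori energy bound, which makes Novikov's condition automatic. First I would record that, by the energy control $\|Y(t)\|_{l^2}^2\le\|y\|_{l^2}^2$ for all $t$ ($Q$-a.s.), the quadratic variation computed in \eqref{LLT} satisfies
\[
 [L,L](T)=\frac1{\sigma^2}\int_0^T \|Y(s)\|_{l^2}^2\,ds
  \le \frac{T}{\sigma^2}\,\|y\|_{l^2}^2 \qquad Q\text{-a.s.},
\]
so $[L,L](T)$ is a bounded random variable. In particular the stochastic integral defining $L$ is meaningful, $L$ is a genuine square-integrable $Q$-martingale on $[0,T]$, and Novikov's condition $\E^Q\big[e^{\frac12[L,L](T)}\big]<\infty$ holds trivially. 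Hence the Dol\'eans--Dade exponential $Z(t):=\exp\big(L(t)-\tfrac12[L,L](t)\big)$ is a strictly positive $Q$-martingale on $[0,T]$ with $\E^Q Z(T)=1$, so $dP/dQ:=Z(T)$ defines a probability measure on $(\Omega,\mathcal F_T)$ that is equivalent to $Q$, in particular absolutely continuous.

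Next I would apply Girsanov's theorem to identify the $P$-dynamics of the driving noises. Writing $L=\frac1\sigma\sum_{h\in J}\int_0^\cdot \langle Y_h(s),dB'_h(s)\rangle$ (using $Y_h=P_h(Y_h)$ and $B_h=P_h(B'_h)$, so the pairing against $B_h$ and against $B'_h$ coincides), the cross-variation of $L$ with the $j$-th component of $B'_h$ equals $\frac1\sigma\int_0^\cdot Y_h^{(j)}(s)\,ds$, because the $B'_{h'}$ are independent standard $\C^3$-Brownian motions and only the $h'=h$ term contributes. By Girsanov the shifted processes
\[
 W'_h(t)=B'_h(t)-\frac1\sigma\int_0^t Y_h(s)\,ds,\qquad h\in J,
\]
are continuous $P$-local martingales whose mutual quadratic covariations are unchanged from those under $Q$; by L\'evy's characterization, applied jointly to the real and imaginary parts of all components, they form a family of independent standard $\C^3$-valued $P$-Brownian motions. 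Since $\langle Y_h,h\rangle=0$, the correction term is orthogonal to $h$, so the constraints and reality relations $\langle W_h,h\rangle=0$, $W_{-h}=\overline{W_h}$ pass through the construction \eqref{da-w'-a-w} unchanged.

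The step requiring the most care is the passage from the finite-dimensional Girsanov theorem to the present setting with countably many Brownian motions: I would handle this either by quoting a Hilbert-space valued version of the theorem, or by truncating to the modes $h\in J$ with $\|h\|<N$, applying the classical statement, and letting $N\to\infty$, using that the truncated exponential densities converge to $Z(T)$ in $L^1(Q)$ (again by the uniform bound on $[L,L](T)$) and that the truncated shifted Brownian motions converge appropriately. The remaining computations --- the cross-variation identity and the bookkeeping with the projections $P_h$ linking $B_h$, $B'_h$ and $Y_h$ --- are routine.
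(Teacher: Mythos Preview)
Your proposal is correct and follows essentially the same approach as the paper: use the a priori energy bound $\|Y(t)\|_{l^2}\le\|y\|_{l^2}$ to verify Novikov's condition, then invoke the classical Girsanov theorem. The paper's own proof is in fact much terser than yours --- it simply records the Novikov bound and cites \cite{Ikeda-Watanabe,Karatzas-Shreve} --- so your cross-variation computation, the appeal to L\'evy's characterization, and the discussion of the countably-many-Brownian-motions issue are all extra detail beyond what the paper supplies.
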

\proof
Since $\|Y(t)\|_{l^2}\le \|y\|_{l^2} \quad Q$-a.s., Novikov condition
\begin{equation}\label{eq:novikov}
 \mathbb{E}^Q
 \left[ e^{\frac 1{\sigma^2}\int_0^T\sum_{h\in J}\|Y_h(s)\|^2 ds }\right]<\infty
\end{equation}
holds true.
Therefore, Girsanov transform now applies in a classical way (see, e.g. 
\cite{Ikeda-Watanabe,Karatzas-Shreve}). 
\hfill $\Box$

We point out that $\frac{dP}{dQ}>0$, $Q$-a.s.; therefore also
$\frac{dQ}{dP}$ is well defined. 
Thus the measures $P$ and $Q$ 
are equivalent (i.e. absolutely continuous to each other).

Our main result is
\begin{thm}\label{thm_nonlinear}
For any initial data of finite energy, system \eqref{eq:nl}
 has an energy controlled  weak solution. Moreover, this solution is unique in law.
\end{thm}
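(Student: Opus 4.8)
The plan is to prove existence by a forward Girsanov transform and uniqueness in law by the reverse transform, using the already established well posedness of the linear system \eqref{eq:linear:_system}. For existence I would start from the strong energy controlled solution $Y=\{Y_h\}$ of \eqref{eq:linear:_system} on $(\Omega,\{\mathcal F_t\},Q)$ driven by $\{B'_h\}_{h\in J}$, which is Theorem \ref{teo-es-lin}, and apply the preceding Girsanov Proposition: since $\|Y(t)\|_{l^2}\le\|y\|_{l^2}$ $Q$-a.s., Novikov's condition \eqref{eq:novikov} holds, so the measure $P$ with $\frac{dP}{dQ}=e^{L(T)-\frac12[L,L](T)}$ is equivalent to $Q$ and under $P$ the processes $W'_h(t)=B'_h(t)-\frac1\sigma\int_0^tY_h(s)\,ds$ are independent standard $\C^3$-valued Brownian motions. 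Building $\widetilde W_{h,k}$ from $\{W'_h\}$ as in \eqref{da-w'-a-w}--\eqref{da-tildetilde-a-tilde}, one has $d\widetilde W_{h,k}=d\widetilde B_{h,k}-\frac{1}{\sigma\|P_h(k)\|}\langle Y_h(s),k\rangle\,ds$, so in each stochastic term of \eqref{eq:linear:_system} the substitution $-i\sigma_h\|P_h(k)\|P_k(Y_{k-h})\,d\widetilde B_{h,k}=-i\sigma_h\|P_h(k)\|P_k(Y_{k-h})\,d\widetilde W_{h,k}-i\frac{\langle Y_h,k\rangle}{1+\alpha\|h\|^2}P_k(Y_{k-h})\,ds$ splits off exactly the nonlinear drift of \eqref{eq:nl}, because $\sigma_h/\sigma=1/(1+\alpha\|h\|^2)$. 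The rearrangement of the sum over $h$ is legitimate since $\sum_h\frac{|\langle Y_h,k\rangle|}{1+\alpha\|h\|^2}\|P_k(Y_{k-h})\|\le\|k\|\,\|Y\|_{l^2}^2\le\|k\|\,\|y\|_{l^2}^2$ by Cauchy--Schwarz and the energy bound. Since $P\sim Q$, the inequality $\|Y(t)\|_{l^2}\le\|y\|_{l^2}$ persists $P$-a.s., so $((\Omega,\{\mathcal F_t\},P),Y,W')$ is an energy controlled weak solution of \eqref{eq:nl}.

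For uniqueness in law I would take two energy controlled weak solutions $((\Omega_i,\{\mathcal F^i_t\},P_i),Y^{(i)},W'^{(i)})$, $i=1,2$, and run the same transform backwards. Because the solutions are energy controlled, $\int_0^T\sum_{h\in J}\|Y^{(i)}_h(s)\|^2\,ds\le T\|y\|_{l^2}^2$ $P_i$-a.s., so Novikov's condition holds trivially and the measure $Q_i$ with $\frac{dQ_i}{dP_i}=\exp\!\big(-\frac1\sigma\sum_{h\in J}\int_0^TY^{(i)}_h\,dW'^{(i)}_h-\frac1{2\sigma^2}\int_0^T\sum_{h\in J}\|Y^{(i)}_h\|^2\,ds\big)$ is equivalent to $P_i$; under $Q_i$ the processes $B'^{(i)}_h(t)=W'^{(i)}_h(t)+\frac1\sigma\int_0^tY^{(i)}_h(s)\,ds$ are independent standard Brownian motions. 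Running the substitution of the previous paragraph in reverse, the nonlinear drift in \eqref{eq:nl} is cancelled and $Y^{(i)}$ under $Q_i$ solves the linear system \eqref{eq:linear:_system} driven by $\{B'^{(i)}_h\}$; moreover it still satisfies the energy inequality since $Q_i\sim P_i$. Hence each $Y^{(i)}$ under $Q_i$ is an energy controlled weak solution of the linear system.

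Now I invoke the linear theory: pathwise uniqueness (the last theorem of Section 4), together with the strong existence of Theorem \ref{teo-es-lin}, yields by the Yamada--Watanabe principle uniqueness in law for \eqref{eq:linear:_system}, and in fact joint uniqueness in law of the pair (solution, driving Brownian motion). Therefore $\mathrm{Law}_{Q_1}\!\big(Y^{(1)},B'^{(1)}\big)=\mathrm{Law}_{Q_2}\!\big(Y^{(2)},B'^{(2)}\big)$ on the path space. Writing $\frac{dP_i}{dQ_i}=\Phi\big(Y^{(i)},B'^{(i)}\big)$ with the single fixed measurable functional $\Phi(y,b)=\exp\!\big(\frac1\sigma\sum_{h\in J}\int_0^Ty_h\,db_h-\frac1{2\sigma^2}\int_0^T\sum_{h\in J}\|y_h\|^2\,ds\big)$ of the pair of paths (using a universally measurable version of the stochastic integral, which agrees $P_i$- and $Q_i$-a.s. for the equivalent measures), one gets for every bounded measurable $F$
\[
 \mathbb{E}^{P_i}\big[F(Y^{(i)})\big]=\mathbb{E}^{Q_i}\big[F(Y^{(i)})\,\Phi(Y^{(i)},B'^{(i)})\big],
\]
and the right-hand side is the same for $i=1$ and $i=2$ because the joint laws under $Q_1$ and $Q_2$ coincide. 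Thus $\mathrm{Law}_{P_1}(Y^{(1)})=\mathrm{Law}_{P_2}(Y^{(2)})$, which is uniqueness in law.

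I expect the main obstacle not to be the algebra of the Girsanov substitution (which is essentially the inverse of the It\^o--Stratonovich bookkeeping already carried out in Section 3), but the measure-theoretic part of the uniqueness step: realizing the Radon--Nikodym density as one and the same measurable functional of the paths $(Y,B')$, so that equality of the joint laws under $Q_1$ and $Q_2$ genuinely transfers to the $P_i$-laws of $Y$, and checking that the pathwise-uniqueness theorem of Section 4 is exactly in the form required by Yamada--Watanabe (it is: it compares two energy controlled solutions on a common filtered space with common initial data and Brownian motions). A secondary point to handle carefully is the a.s.-convergence of the infinite sums of drift terms produced by the transform, which however is controlled uniformly by the energy bound as above.
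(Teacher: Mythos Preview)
Your proof is correct and follows essentially the same strategy as the paper: forward Girsanov from the linear strong solution to produce an energy controlled weak solution of \eqref{eq:nl}, and reverse Girsanov plus Yamada--Watanabe on the linear system for uniqueness in law. You supply considerably more detail than the paper does---in particular the explicit substitution $d\widetilde B_{h,k}=d\widetilde W_{h,k}+\frac{1}{\sigma\|P_h(k)\|}\langle Y_h,k\rangle\,ds$ that generates the nonlinear drift, and the measure-theoretic step expressing $dP_i/dQ_i$ as a single path functional of $(Y^{(i)},B'^{(i)})$---but the architecture is identical.
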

\begin{proof}
As far as the existence is concerned, we have that \eqref{eq:linear:_system} 
has a unique strong solution
$Y$ defined on $(\Omega,\{\mathcal F_t\}, Q)$ and satisfying 
\eqref{eq:linear:_system}.
Using the Girsanov transform \eqref{gir}, we get that 
$((\Omega,\{\mathcal F_t\}, P),Y,W')$ is a weak solution
of \eqref{eq:nl}. Moreover, the measure $P$ is equivalent to the measure $Q$; then
$\|Y(t)\|_{l^2}\le \|y\|_{l^2}$ $P$- and $Q$-a.s. This means that this weak solution is an energy controlled solution.

As far as the uniqueness is concerned,
if there were two different weak solutions of the nonlinear system 
\eqref{eq:nl},
then each of them would give rise to a weak solution of the linear
system \eqref{eq:linear:_system};
these are obtained starting from $((\Omega,\{\mathcal F_t\}, P),Y,W')$ 
and getting $((\Omega,\{\mathcal F_t\}, Q),Y,B')$
via Girsanov transform.
On the other side, the pathwise uniqueness for the linear system 
\eqref{eq:linear:_system} implies the weak uniqueness; this comes from
Yamada-Watanabe theorem, which is usually known for finite
dimensional systems but whose validity holds also in the infinite dimensional
setting as soon as the It\^o stochastic integrals are well defined. Now, using
the absolute continuity of $P$ and $Q$, we deduce that the nonlinear 
system \eqref{eq:nl} has a unique solution (in law).
\end{proof}

\begin{remark}
i) The proof shows that our technique can be applied for any
$\alpha>0$  to more
general models, that is we can  deal
with a noise defined by means of 
$\sigma_h=\frac{\sigma }{\left(1+\alpha\|h\|^2\right)^{p}}$
and with  the smoothing term given by  
 $u^\alpha=(1-\alpha\Delta)^{-p}v^\alpha$, for any $p>4/3$.
\\ii) In the 2-dimensional case, all our computations can be extended
  for $ p>1/2$.
\end{remark}

\section{The formulation in SPDE}
The stochastic model considered so far in Fourier components
can be written as a stochastic partial differential equation, as follows
\begin{equation}\label{spde}
\left\{
\begin{array}{l}
  dv+(u \cdot \triangledown ) v\ dt\ +\triangledown p\ dt\ =
 \sum_{h\in\mathbb{Z}^3} \sum_{j=1}^3
 \sigma_h e_h \frac{\partial v}{\partial x^{(j)}} \circ dW_h^{(j)}
\\
 v=(1-\alpha \bigtriangleup) u \\
 \text{div}\ v=0\\
 v(0)=v_{0}
\end{array}
\right.
\end{equation}
For simplicilty we have dropped out the index $\alpha$ in the
unknowns.

The first equation of system \eqref{spde} can be
written in a more compact form as
\begin{equation}\label{SPDE}
 dv+\sum_{j=1}^3 \frac{\partial v}{\partial x^{(j)}}\ u^{(j)}\ dt\ 
 +\triangledown p\ dt\ =
 \sum_{j=1}^3
 \frac{\partial v}{\partial x^{(j)}} \circ dW^{(j)} .
\end{equation}
Here the random field $W$ is given as 
$W(t,x):=\sum_{h\in\mathbb{Z}^3} \sigma_h e_h(x) W_h(t)$.

More precisely, the Wiener process $W$ has the following form
\begin{equation}\label{noise}
 W(t,x)=2\sigma \sum_{h\in J}\frac{\cos (\langle h,x\rangle )\Re W_{h}(t)
 -\sin (\langle h,x\rangle ) \Im W_{h}(t)}{1+\alpha\|h\|^2},
\end{equation}
where $\{W_h\}_{h\in J}$ is a family of 
 independent $\C^3$-valued Brownian motions
on a filtered probability space $(\Omega,(\mathcal {F}_{t})_{t\ge 0}, P)$,
such that $\left<W_{h}(t),h\right>=0$.

Let us denote by $H$ and $V$ the subspaces of $(L^2(\mathbb T))^3$ and 
$(H^1(\mathbb T))^3$ respectively, given by 
vectors fields divergence free and  periodic: 
\[\begin{split}
 H&=\{ v\in \left(L^2(\mathbb T)\right)^{3},\quad  
      \nabla\cdot v=0, \quad \int_{\mathbb T}v(x)dx=0, 
      v \cdot n \text{ periodic on }\mathbb T\}
\\
V&=\{v \in H: v \in [H^1(\mathbb T)]^3,\quad u  \text{ periodic on }\mathbb T\}
\end{split}\]
where $n$ is the unit normal to the boundary of the spatial domain.

Moreover, identifying $H$ with its dual $H^\prime$ 
we get the Gelfand triple $(V,H,V^\prime)$
$$V\subset H\simeq H^\prime \subset V^\prime.$$
The norms are inheritaed from the spaces $(L^2(\mathbb T))^3$ and 
$(H^1(\mathbb T))^3$.

\begin{definition}\label{def_spde}
Given $v_{0}\in H$, a weak solution of \eqref{spde} in 
$H$
is a filtered probability space $(\Omega,\{\mathcal F_t\}, P)$,
 a sequence of independent $\C^3$-valued Brownian motions
$\{W_h\}_{h}$ on $(\Omega,\{\mathcal F_t\}, P)$
and an $H$-valued continuous and 
adapted stochastic process $v$ on
$(\Omega,\{\mathcal F_t\}, P)$,  such that
\begin{equation}\label{formadebole}
\begin{split}
 \int_{\mathbb{T}}\langle & v(t,x), \phi(x)\rangle dx 
 -\int_{0}^{t} \int_{\mathbb{T}} \langle (u(s,x)\cdot\nabla)\phi(x),
 v(s,x)\rangle  ds \ dx
\\&=
 \int_{\mathbb{T}}\langle v_{0}(x),\phi(x)\rangle dx\\
 & \qquad-\sum_{h\in\mathbb{Z}^3}    \sigma_h \sum_{j=1}^3 
 \int_{0}^{t}\Big(\int_{\mathbb{T}} e_h(x)
 \langle\frac{\partial \phi}{\partial x^{(j)}}(x),v(s,x)\rangle dx \Big)
 \circ dW_h^{(j)}(s), \; P-a.s.
\end{split}
 \end{equation}
for each $t\in [0,T]$ and for all test 
functions $\phi : \mathbb R^3\to \mathbb R^3$, 
periodic on $\mathbb T$, divergence free and of class $C^1$. 
We denote this solution by
$((\Omega,\{\mathcal F_t\}, P),v,W)$.\\
Moreover, it is called an energy controlled weak solution if for all $t \ge 0$
this solution satisfies
\[
 \|v(t,\cdot)\|_{H}\le \|v_0\|_{H}\qquad P-a.s.
\]
\end{definition}

This weak formulation corresponds to the stochastic equation
\eqref{SPDE}. Indeed, for a more regular solution $v$, by integration
by parts in \eqref{formadebole} one gets \eqref{SPDE}. 
This is a classical result for the Euler equation and the stochastic part uses
the properties of the Brownian motions.
Therefore we have the following result.
\begin{prp}
The equality $v(t,x)=\sum_{h\in\mathbb{Z}^3} Y_h(t) e_h(x)$
relates the solutions of the stochastic PDE \eqref{formadebole}
and  of the stochastic Fourier system
\eqref{eq:termine_di_stratonovich}
(or \eqref{ito}).
\end{prp}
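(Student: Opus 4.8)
The plan is to establish the correspondence $v(t,x)=\sum_{h} Y_h(t) e_h(x)$ in both directions by matching Fourier coefficients in the weak formulation \eqref{formadebole}. First I would start from a solution $v$ of \eqref{formadebole} and, for each fixed $k\in\Z^3$, choose test functions built from $e_{-k}$: concretely, take $\phi(x)=P_k(a)\,e_{-k}(x)$ for an arbitrary fixed vector $a\in\C^3$ (plus its conjugate to keep $\phi$ real-valued), which is divergence free precisely because $\langle P_k(a),k\rangle=0$, periodic, and smooth. Plugging this into \eqref{formadebole}, the spatial integrals $\int_{\mathbb T}e_h(x)e_{-k}(x)dx$ collapse to $2\pi$-type orthogonality factors, so each term reduces to a finite combination of the Fourier coefficients $Y_h=v_h$. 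The drift term $\int\langle(u\cdot\nabla)\phi,v\rangle\,dx$ becomes, after the differentiation $\tfrac{\partial}{\partial x^{(j)}}e_{-k}=-ik^{(j)}e_{-k}$ and orthogonality, exactly the convolution sum $-i\sum_h \frac{\langle v_h,k\rangle}{1+\alpha\|h\|^2}\langle P_k(v_{k-h}),a\rangle$ appearing in \eqref{eq:termine_di_stratonovich}; similarly the noise term reproduces the Stratonovich stochastic term with the factor $\sigma_h\|P_h(k)\|$ once one recalls the definitions \eqref{da-w'-a-w}–\eqref{da-tildetilde-a-tilde} relating $W_h$ and $\widetilde W_{h,k}$. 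Since $a$ is arbitrary in $k^\perp$ and $\langle Y_k,k\rangle=0$ is forced by $v\in H$, this yields \eqref{eq:termine_di_stratonovich} for every $k$.

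Conversely, given a solution $\{Y_k\}$ of \eqref{eq:termine_di_stratonovich} with $\sum_k\|Y_k(t)\|^2<\infty$, I would define $v(t,x):=\sum_h Y_h(t)e_h(x)$, which lies in $H$ for each $t$ with $\|v(t,\cdot)\|_H=\|Y(t)\|_{l^2}$ (using the reality condition $Y_{-k}=\overline{Y_k}$ and the incompressibility condition), and then verify \eqref{formadebole} by expanding a general test function $\phi$ in its Fourier series $\phi(x)=\sum_k \phi_k e_k(x)$, substituting, and using the Fourier identities just described in reverse. Here one has to justify interchanging the (infinite) sum over Fourier modes of $\phi$ with the time integral and with the stochastic integral; for the deterministic terms this follows from the a priori bound $\|dv_k/dt\|\le\|v\|_{l^2}^2\|k\|$ noted after \eqref{Eu-Fou} together with the rapid decay of $\phi_k$ (test functions are $C^1$, in fact one may first take $\phi$ a trigonometric polynomial and then pass to the limit), and for the stochastic term from the It\^o-isometry estimates already carried out in Section 4 showing the series of stochastic integrals converges in $L^2(\Omega\times[0,T])$.

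The continuity and adaptedness of $v$ as an $H$-valued process transfers from the corresponding properties of the $Y_k$ established in Theorem \ref{teo-es-lin} (or Theorem \ref{thm_nonlinear}) together with the uniform energy bound, which gives equicontinuity of the tails; and the energy inequality $\|v(t,\cdot)\|_H\le\|v_0\|_H$ is just a restatement of $\|Y(t)\|_{l^2}\le\|y\|_{l^2}$. I expect the main obstacle to be the careful bookkeeping in passing between the Stratonovich stochastic integral in the PDE \eqref{formadebole} and the one in the Fourier system \eqref{eq:termine_di_stratonovich}: one must check that the Stratonovich correction terms match up, which is cleanest by first passing to It\^o form on both sides (using Theorem 1 / equation \eqref{ito}) and comparing the It\^o corrective terms, or alternatively by approximating the Stratonovich integrals by their Riemann-sum / smoothed-noise versions and using that the Fourier transform is linear and continuous, then taking limits. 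Everything else is routine Fourier analysis and orthogonality, so I would state the bidirectional correspondence and relegate the coefficient-matching computations, which are entirely analogous to those already displayed in Sections 2 and 3, to a brief remark.
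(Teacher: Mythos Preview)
Your proposal is correct and follows the same underlying idea as the paper --- testing the weak formulation against Fourier modes (equivalently, integrating by parts to recover the strong form and then reading off Fourier coefficients as in Sections~2--3), and handling the stochastic term via the relations \eqref{da-w'-a-w}--\eqref{da-tildetilde-a-tilde} and Theorem~1. The paper itself gives essentially no proof: it simply remarks before the proposition that for regular $v$ integration by parts in \eqref{formadebole} returns \eqref{SPDE}, that this is classical for the Euler part, and that the stochastic part ``uses the properties of the Brownian motions''; your write-up is considerably more detailed than what appears there, but not different in spirit.
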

Finally, we can reformulate our result for the SPDE:
\begin{thm}
For any initial velocity of finite energy, equation \eqref{SPDE}
 has an energy controlled  weak solution. Moreover, 
this solution is unique in law.
\end{thm}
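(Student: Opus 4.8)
The plan is to transfer the Fourier-side result (Theorem \ref{thm_nonlinear}) to the SPDE \eqref{SPDE} via the explicit correspondence $v(t,x)=\sum_{h\in\Z^3} Y_h(t) e_h(x)$ recorded in the Proposition immediately above. First I would verify that this correspondence is a genuine bijection at the level of solution concepts: given an $H$-valued process $v$ as in Definition \ref{def_spde}, expanding $v(t,\cdot)$ in the orthogonal basis $\{e_h\}$ produces Fourier coefficients $Y_h(t)=\langle v(t,\cdot),e_h\rangle_{L^2}/(2\pi)^3$ which are continuous, adapted, $\C^3$-valued, satisfy the reality and divergence-free constraints inherited from $v$, and satisfy $\sum_h \|Y_h(t)\|^2 = c\|v(t,\cdot)\|_H^2 < \infty$; conversely, given $\{Y_h\}$ with $\sum_h\|Y_h(t)\|^2<\infty$ the series $\sum_h Y_h(t)e_h$ defines an element of $H$ for each $t$. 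The key point is that testing \eqref{formadebole} against $\phi=e_{-k}$ (componentwise, and noting such $\phi$ lies in the admissible test class after taking real/imaginary parts) reproduces exactly the integrated form of \eqref{eq:termine_di_stratonovich} for the coefficient $Y_k$, while linear combinations of the $e_{-k}$ are dense enough in the test space that the two weak formulations are equivalent. This equivalence sends energy-controlled solutions to energy-controlled solutions since $\|v(t,\cdot)\|_H^2 = c\sum_k\|Y_k(t)\|^2$ and $\|v_0\|_H^2 = c\sum_k \|y_k\|^2$ with the same constant.

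Granting that dictionary, \textbf{existence} is immediate: take the energy controlled weak solution $((\Omega,\{\mathcal F_t\},P),Y,W')$ of \eqref{eq:nl} provided by Theorem \ref{thm_nonlinear}, set $v(t,x):=\sum_{h\in\Z^3}Y_h(t)e_h(x)$, and construct the $\C^3$-valued Brownian motions $\{W_h\}$ from $\{W'_h\}_{h\in J}$ by \eqref{da-w'-a-w}; then $((\Omega,\{\mathcal F_t\},P),v,W)$ is an energy controlled weak solution of \eqref{spde} by the correspondence, and $v$ has a continuous $H$-valued modification because $\sum_h\|Y_h(t)\|^2$ is bounded and each $Y_h$ is continuous (the same argument that gave continuity of $Y$ in the proof of Theorem \ref{teo-es-lin}, now at the level of $H$). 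For \textbf{uniqueness in law}: if $((\Omega_i,\{\mathcal F^i_t\},P_i),v_i,W_i)$, $i=1,2$, are two energy controlled weak solutions of \eqref{spde}, then their Fourier coefficient processes $Y^{(i)}$ together with the driving family $W'^{(i)}$ recovered from $W_i$ are energy controlled weak solutions of \eqref{eq:nl} in the sense of Definition \ref{def_nonlinear}. By Theorem \ref{thm_nonlinear} these two Fourier-side solutions have the same law on $C([0,T];l^2)$. Since the map $l^2\ni (Y_h)_h\mapsto \sum_h Y_h e_h\in H$ is a linear isometry (up to the fixed constant) and hence a homeomorphism $C([0,T];l^2)\to C([0,T];H)$, the laws of $v_1$ and $v_2$ on $C([0,T];H)$ coincide, which is the asserted uniqueness in law.

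The main obstacle, and the step deserving the most care, is the rigorous justification that the weak PDE formulation \eqref{formadebole} — which involves a Stratonovich stochastic integral against test functions — is truly equivalent, coefficient by coefficient, to the Fourier system in Stratonovich form \eqref{eq:termine_di_stratonovich}, and in particular that the Stratonovich-to-It\^o correction computed in Theorem 1 is compatible with the correction one would obtain working directly in \eqref{formadebole}. One must check that for the specific test functions $e_{-k}$ the cross-variation term $[\,\cdot\,,W_h]$ appearing when rewriting the Stratonovich integral in \eqref{formadebole} in It\^o form matches, after projection and summation, the term $-\sum_h \sigma_h^2\|P_h(k)\|^2 P_k(P_{k-h}(Y_k))$ in \eqref{ito}; this is the content of the computation already carried out in the proof of Theorem 1, but it must be invoked at the level of the PDE test-function pairing rather than for a single mode, which requires interchanging the infinite sum over $h$ with the stochastic integral — legitimate here because $\sum_h\sigma_h^2<\infty$ and $\sum_h\|Y_{k-h}\|^2\le\|y\|_{l^2}^2$, exactly the estimates used in Section 4. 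A secondary routine point is density: one must confirm that real and imaginary parts of $e_{-k}$, i.e. $\cos\langle k,\cdot\rangle$ and $\sin\langle k,\cdot\rangle$ times constant divergence-free vectors orthogonal to $k$, together span a dense subset of the admissible $C^1$ divergence-free periodic test fields, so that \eqref{formadebole} for all such $\phi$ is equivalent to \eqref{formadebole} for this countable family, hence to the full Fourier system.
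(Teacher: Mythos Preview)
Your proposal is correct and follows exactly the paper's approach: the paper gives no separate proof of this theorem at all, treating it as an immediate reformulation of Theorem \ref{thm_nonlinear} via the correspondence Proposition $v(t,x)=\sum_h Y_h(t)e_h(x)$, and your write-up simply spells out the details of that transfer (the dictionary between Definitions \ref{def_nonlinear} and \ref{def_spde}, and the density/Stratonovich-correction checks underlying the Proposition). The one point you flag about $H$-valued continuity is a genuine mismatch between the two definitions in the paper itself---Definition \ref{def_nonlinear} asks only for componentwise continuity while Definition \ref{def_spde} asks for continuity in $H$---so your caution there is well placed, though the paper does not address it either.
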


\section{Appendix}

In this section, we present a  proof of
global existence of a weak solution for the deterministic system 
\eqref{EEdet} with initial velocity of finite energy;  
here weak has to be understood in the sense of PDE's. No written proof 
has been found in the published literature, whereas there are results
of local (in time) existence and uniqueness 
for very regular initial velocity; however, uniqueness of weak solutions is
an open problem. Anyway, Edriss Titi has 
presented a proof of global existence of weak solutions,
as a private communication. 

For simplicity, let us drop the index $\alpha$.

\begin{thm}\label{det}(Due to E. S. Titi)
Let  $v_{0}\in H$ and $T>0$. Then there exists a 
global weak  solution $v$ to the system \eqref{EEdet}
such that
$$v\in L^{\infty}([0,T]; H)\cap C([0, T]; V^\prime)$$
and 
\begin{equation}\label{weaksolution}
\begin{split}
 \int_{\mathbb{T}}\langle & v(t,x), \phi(x)\rangle dx 
 -\int_{0}^{t} \int_{\mathbb{T}} \langle (u(s,x)\cdot\nabla)\phi(x),
 v(s,x)\rangle  ds \ dx
\\&=
 \int_{\mathbb{T}}\langle v_{0}(x),\phi(x)\rangle dx\\
 \end{split}
 \end{equation}
for each $t\in [0,T]$ and for all test 
functions $\phi : \mathbb R^3\to \mathbb R^3$,
periodic with period box $\mathbb T$, divergence free and of class $C^1$. 
\end{thm}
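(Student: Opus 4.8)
The plan is to prove global existence for the deterministic Leray-$\alpha$ Euler system \eqref{EEdet} by a Galerkin approximation, exploiting the regularizing effect of the operator $(1-\alpha\Delta)^{-1}$ to pass to the limit in the nonlinear term. First I would introduce the Galerkin projections $P_N$ onto the span of $\{e_k : \|k\|\le N\}$ within $H$, and consider the finite-dimensional ODE system for $v^N = P_N v$, with $u^N = (1-\alpha\Delta)^{-1} v^N$ and the nonlinearity projected: $\partial_t v^N + P_N[(u^N\cdot\nabla)v^N] = 0$, $v^N(0) = P_N v_0$. Local existence of $v^N$ is immediate from the Cauchy-Lipschitz theorem since the right-hand side is a smooth (quadratic) vector field on a finite-dimensional space. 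The key \emph{a priori} bound is energy conservation: testing the equation with $v^N$ and using that $u^N$ is divergence free together with the skew-symmetry $\int_{\mathbb T}\langle (u^N\cdot\nabla)v^N, v^N\rangle\,dx = 0$ (valid because $u^N$, not $v^N$, carries the transport, and $\mathrm{div}\,u^N = 0$), one gets $\frac{d}{dt}\|v^N(t)\|_H^2 = 0$, hence $\|v^N(t)\|_H = \|P_N v_0\|_H \le \|v_0\|_H$ for all $t$. This global bound both extends the solution to all of $[0,T]$ and gives $v^N$ bounded in $L^\infty([0,T];H)$ uniformly in $N$.

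Next I would extract compactness. From $\|v^N\|_{L^\infty([0,T];H)} \le \|v_0\|_H$ one has $\|u^N\|_{L^\infty([0,T];H^2)} \le C(\alpha)\|v_0\|_H$ by elliptic regularity for $1-\alpha\Delta$; in particular, by Sobolev embedding in dimension $3$, $u^N$ is bounded in $L^\infty([0,T]; L^\infty(\mathbb T))$ and in $L^\infty([0,T]; W^{1,\infty})$ — actually $H^2 \hookrightarrow C^{0,1/2}$ suffices for what follows, but the $L^\infty$ bound on $u^N$ is the clean one. Then the nonlinear term $(u^N\cdot\nabla)v^N$ is bounded in $L^\infty([0,T]; V')$: for a test function $\phi \in V$, $|\int_{\mathbb T}\langle (u^N\cdot\nabla)\phi, v^N\rangle\,dx| \le \|u^N\|_{L^\infty}\|\nabla\phi\|_H\|v^N\|_H$, after moving the derivative onto $\phi$. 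Hence $\partial_t v^N$ is bounded in $L^\infty([0,T]; V')$, and together with $v^N$ bounded in $L^\infty([0,T];H)$ and the compact embedding $H \hookrightarrow\hookrightarrow V'$ (or directly $H \hookrightarrow V'$ compact is false; rather use $H \hookrightarrow H^{-s}$ compact for $s>0$, with $V' = H^{-1}$), an Aubin–Lions–Simon argument yields a subsequence with $v^N \to v$ strongly in $C([0,T]; V')$ and weak-$\star$ in $L^\infty([0,T];H)$. Correspondingly $u^N \to u := (1-\alpha\Delta)^{-1}v$ strongly in $C([0,T]; V)$ and, by interpolation with the uniform $H^2$ bound, strongly in $C([0,T]; L^\infty)$.

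The main obstacle — as always for Euler-type equations — is passing to the limit in the nonlinearity, and this is precisely where the $\alpha$-regularization pays off. In the weak formulation the troublesome term is $\int_0^t\int_{\mathbb T}\langle (u^N\cdot\nabla)\phi, v^N\rangle\,ds\,dx$, which is \emph{linear} in $v^N$ and only requires $u^N \to u$ in, say, $C([0,T];L^\infty)$ and $v^N \rightharpoonup v$ weak-$\star$ in $L^\infty([0,T];H)$ — the product of a strongly convergent and a weakly convergent sequence converges to the product of the limits. (Note the structural advantage: the smoothed field $u^N$ is differentiated and the derivative is transferred onto the smooth $\phi$, so one never needs strong convergence of $\nabla v^N$, which is unavailable.) I would then fix a test function $\phi = e_k$-combination with $\|k\|\le M$, pass to the limit for $N \ge M$, and conclude that $v$ satisfies \eqref{weaksolution} for all such $\phi$; density of trigonometric polynomials in $V$ extends this to all admissible $\phi$. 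Finally, $v \in C([0,T]; V')$ follows from the equation (the right-hand side of the integral identity is continuous in $t$), and $v \in L^\infty([0,T];H)$ from the uniform energy bound, so $v$ has the claimed regularity; the initial condition $v(0) = v_0$ is recovered from the $C([0,T];V')$ continuity and $v^N(0) = P_N v_0 \to v_0$. \hfill$\Box$
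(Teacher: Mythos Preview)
Your argument is correct and follows essentially the same approach as the paper: Galerkin approximation, energy conservation giving $v^N$ bounded in $L^\infty(0,T;H)$, the smoothing $u^N=(1-\alpha\Delta)^{-1}v^N$ bounded in $L^\infty(0,T;H^2)$, a $V'$-bound on $\partial_t v^N$, compactness, and passage to the limit in the weak form using strong convergence of $u^N$ paired with weak convergence of $v^N$. The paper differs only cosmetically---it invokes Ascoli--Arzel\`a for $v^N$ and a separate Aubin--Lions step for $u^N$ (via a bound on $\partial_t u^N$) rather than your route through $(1-\alpha\Delta)^{-1}$ plus interpolation; incidentally, your parenthetical hesitation is unwarranted, since $V'=H^{-1}$ and the embedding $H\hookrightarrow V'$ is indeed compact.
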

\begin{proof}
Let $P_N$ be the finite dimensional projector that is $v^N=P_Nv$ means
$v^N(x)=\sum_{\|h\|<N}(\int_{\mathbb T} v(x) e_h(x) dx) e_h(x)$.
Then, we get the following finite-dimensional
 system system
\begin{equation}\label{sysm}
\begin{cases}
\dfrac{dv^{N}}{dt}+P_{N}[(u^{N}\cdot \nabla)v^{N}]=0&\\
u^{N}+\alpha \Delta u^{N}=v^{N}&
\end{cases}
\end{equation}
From $\eqref{sysm}_{1}$, we get  $\frac{d\;}{dt}\|v^N(t)\|^2_H=0$ for
any $N$ and $t$;
then $\|v^N(t)\|^2_H\le \|v_0\|^2_H$. Thus
\begin{equation}\label{u1}
 \sup_N\|v^N\|_{L^{\infty}(0,T;H)}<\infty;
\end{equation}
hence using $\eqref{sysm}_{2}$
\begin{equation}\label{v1}
 \sup_N \|u^N\|_{L^{\infty}(0,T;(H^{2}(\mathbb T))^3)}<\infty.
\end{equation}

Using again equation $\eqref{sysm}_{1}$
\begin{equation*}
\begin{split}
 \|\frac{dv^N(t)}{dt}\|_{V^\prime}&
  =\sup_{\|\phi\|_V=1}|\langle (u^N(t)\cdot \nabla)v^N(t),P_N\phi \rangle |\\
 &=\sup_{\|\phi\|_V=1}|\langle (u^N(t)\cdot \nabla)P_N\phi,v^N(t) \rangle |\\
 &\le \|u^N(t)\|_{(L^{\infty}(\mathbb T))^3}\|P_N\phi\|_V \|v^N(t)\|_{H}.
\end{split}
\end{equation*}
Using again \eqref{v1} and the embedding theorem 
$H^{2}(\mathbb T)\subset L^{\infty}(\mathbb T)$, 
we get that 
\begin{equation}\label{u2}
 \sup_N\|\frac{dv^N}{dt}(t)\|_{L^{\infty}(0,T;V^\prime)}<\infty.
\end{equation}
The estimate \eqref{u2} means that $\left\{v^N \right\}$ is 
uniformly Lipschitz  in $V^\prime$. 
On the other side using the estimate \eqref{u1}, 
$\left\{v^N(t)\right\}$ is inside a bounded ball of $H$.
Hence, the set $\left\{ v^N(t), \quad \forall t \right\}$ is a
compact subset of $V^\prime$. 
Using the Ascoli-Arzel\`{a} theorem, 
we can extract a subsequence called again $v^N(t)$ such that
\begin{equation*}
 v^N\longrightarrow v\quad {\rm in}\quad C([0,T]; V^\prime)
\end{equation*}
and $v\in  C([0,T]; V^\prime$. 
Moreover, using the estimate \eqref{u2}, the limit $v$ is $Lip([0,T]; V^\prime)$.

Using \eqref{u2} and \eqref{sysm}$_2$ we get that
\begin{equation}\label{v2}
 \sup_N \|\frac{du^N}{dt}(t)\|_{L^{\infty}(0,T;V)}<\infty;
\end{equation}
this result and \eqref{v1} allow to use
the compactness theorem (Aubin-Lions). Therefore 
we can extract a subsequence, again called $u^N$ such that
\begin{equation*}
 u^N\longrightarrow u=(1-\alpha\Delta)^{-1} v\quad {\rm in}
  \quad L^{p}([0,T]; H^{2-\epsilon}),
\end{equation*}
for some arbitrary $\epsilon>0$ and $p$ finite. We shall take
$\epsilon<\frac 12$ in order to use that $H^{2-\epsilon}(\mathbb T)
\subset L^\infty(\mathbb T)$.
Now, we have all the ingredients to pass to the limit in the system 
 \eqref{sysm} that we are going to write in the weak form:
\begin{equation*}
\int_{\mathbb T}  \langle v^N(t,x)-v^N(s,x),\phi(x) \rangle dx
 =\int_{s}^{t}\int_{\mathbb T}\langle (u^N(r,x)\cdot\nabla)P_N \phi(x),
 v^N(r)\rangle dx \ dr.
\end{equation*}
It is  easy  to pass to the limit on the l.h.s. 
of the above equality. Let us focus of the r.h.s. of above equality:
the non linear term 
$\langle (u^N \cdot\nabla) P_N \phi, v^N\rangle$
converges  weakly in $L^1(0,T;V^\prime)$, since
$u^N$ converges strongly in $L^2(0,T;[L^\infty(\mathbb T)]^3)$
and $v^N$ converges weakly in $L^2(0,T;H)$ (due to \eqref{u1},
considering possibly a new subsequence). 
\end{proof}

{\bf Acknowledgment:} We are very grateful to Professors  
Franco Flandoli and Edriss S. Titi for various stimulating conversations.
The work of H. Bessaih was supported in part by the
GNAMPA-INDAM project "Professori Visitatori". We would like to thank
the hospitality of the Department of Mathematics of the University of Pavia
where part of this research started and the IMA in Minneapolis where 
the paper has been finalized.

\end{document}